\documentclass[12pt, twoside, leqno]{article}

\usepackage{amsmath,amsthm}
\usepackage{amssymb}
\usepackage{enumitem}
\usepackage{url}

\pagestyle{myheadings}
\markboth{K VENKATASUBBAREDDY and AYYADURAI SANKARANARAYANAN}{On the average behavior of coefficients related to triple product L-functions}

\newtheorem{theorem}{Theorem}[section]
\newtheorem{lemma}{Lemma}[section]

\newtheorem{remark}{Remark}
\newtheorem*{concluding remark}{Concluding Remark}
\newtheorem{theoremA}{Theorem}

\frenchspacing

\textwidth=13.5cm
\textheight=23cm
\parindent=16pt
\oddsidemargin=-0.5cm
\evensidemargin=-0.5cm
\topmargin=-0.5cm

\begin{document}

\baselineskip=17pt

\title{On the average behavior of coefficients related to triple product L-functions}

\author{K VENKATASUBBAREDDY\\
School of Mathematics and Statistics\\ 
University of Hyderabad\\
Hyderabad\\
500046 Telangana, India\\
E-mail: 20mmpp02@uohyd.ac.in
\and 
AYYADURAI SANKARANARAYANAN\\
School of Mathematics and Statistics\\ 
University of Hyderabad\\
Hyderabad\\
500046 Telangana, India\\
E-mail: sank@uohyd.ac.in}

\date{}

\maketitle

\renewcommand{\thefootnote}{}
\renewcommand*{\thetheoremA}{\Alph{theoremA}}
\footnote{2010 AMS \emph{Mathematics subject classification.} Primary 11F30, 11F66.}

\footnote{\emph{Keywords and phrases.} Fourier coefficients of automorphic forms, Dirichlet series, triple product $L-$functions, Perron's formula, Maximum modulus principle.}

\renewcommand{\thefootnote}{\arabic{footnote}}
\setcounter{footnote}{0}


\begin{abstract}
    In this paper, we study the average behaviour of the coefficients of triple product L-functions and some related L-functions corresponding to normalized primitive holomorphic cusp form $f(z)$ of weight $k$ for the full modular group $SL(2,\ \mathbb{Z}).$ Here we call $f(z)$ a primitive cusp form if it is an eighenfunction of all Hecke operators simultaneously.
\end{abstract}

\section{Introduction}
For an even integer $k\geq 2$, denote $H_k^*$ the set of all normalized Hecke primitive cusp forms of weight $k$ for the full modular group $SL(2,\ \mathbb{Z})$. Throughout this paper we call the function $f(z)$ as a primitive cusp form if it is an eighenfunction of all Hecke operators simultaneously. It is known that $f(z)$ has a Fourier expansion at cusp $\infty$, write it as
\begin{equation*}
    f(z)=\sum_{n=1}^\infty c(n) e^{2\pi i n z}
\end{equation*}
for $\Im (z)>0$.\\
Rewrite the Fourier expansion as 
\begin{equation*}
    f(z)=\sum_{n=1}^\infty \lambda_f(n)n^{(k-1)/2}e^{2\pi i n z}
\end{equation*}
for $\Im (z)>0$, where $\lambda_f(n)=\frac{c(n)}{n^{(k-1)/2}}$.\\
Then by Deligne \cite{B}, we have, for any prime number $p$, there exists two complex numbers $\alpha_f(p)$ and $\beta_f(p)$, such that 
\begin{equation*}
    \alpha_f(p)\beta_f(p)=|\alpha_f(p)|=|\beta_f(p)|=1
\end{equation*} 
and 
\begin{equation*}
    \lambda_f(p)=\alpha_f(p)+\beta_f(p).
\end{equation*}
For a normalized primitive cusp form $f(z)$ of weight $k$, the triple product $L$-function $L(f\otimes f\otimes f)$ is defined as 
\begin{align*}
    L(f\otimes f\otimes f)&=\prod_p\bigg(1-\frac{\alpha_p^3}{p^s}\bigg)^{-1}\bigg(1-\frac{\alpha_p}{p^s}\bigg)^{-3}\bigg(1-\frac{\beta_p^3}{p^s}\bigg)^{-1}\bigg(1-\frac{\beta_p}{p^s}\bigg)^{-3}\\
    &=\sum_{n=1}^\infty\frac{\lambda_{f\otimes f\otimes f}(n)}{n^s}
\end{align*}
for $\Re (s)>1$. The $j^{th}$ symmetric power $L$-function attached to $f$ is defined by
\begin{equation}
        L({\rm{sym}}^jf,\ s)=\prod_p\prod_{m=0}^j(1-\alpha_p^{j-m}\beta_p^mp^{-s})^{-1}\label{E1}
\end{equation}
for $\Re (s)>1$. We may express it as a Derichlet series: for $\Re (s)>1$, 
\begin{align}
     L({\rm{sym}}^j f,\ s)&=\sum_{n=1}^\infty \frac{\lambda_{{\rm{sym}}^j f}(n)}{n^s}\nonumber\\
     &=\prod_p\bigg(1+\frac{\lambda_{{\rm{sym}}^j f}(p)}{p^s}+\ldots+\frac{\lambda_{{\rm{sym}}^j f}(p^k)}{p^{ks}}+\ldots\bigg).\label{E2}
\end{align}
It is well known that $\lambda_{{\rm{sym}}^j f}(n)$ is a real multiplicative function. The Rankin-Selberg $L$-function $L({\rm{sym}}^i f\otimes{\rm{sym}}^j f,\ s)$ attached to ${\rm{sym}}^i f$ and ${\rm{sym}}^j f$ is defined as
\begin{align}
    L({\rm{sym}}^if\otimes {\rm{sym}}^j f,\ s)&=\prod_p\prod_{m=0}^i \prod_{m{'}=0}^j\bigg(1-\frac{\alpha_p^{i-m}\beta_p^m\alpha_p^{j-m{'}}\beta_p^{m{'}}}{p^s}\bigg)^{-1}\nonumber\\
    &=\sum_{n=1}^\infty\frac{\lambda_{{\rm{sym}}^i f\otimes {\rm{sym}}^j f}(n)}{n^s}.\label{E3}
\end{align}
For $\Re (s)>1$, define
\begin{equation*}
    L_f(s)=\sum_{n=1}^\infty\frac{\lambda_{f\otimes f\otimes f}(n)^2}{n^s}
\end{equation*}
and for $\Re (s)>1$, define
\begin{equation*}
    D_f(s)=\sum_{n=1}^\infty\frac{\lambda_{{\rm{sym}}^2 f\otimes f}(n)^2}{n^s}.
\end{equation*}

\section{Theorems etc.}
Here, after $\epsilon$ and $\delta$ denote any small positive constants and implied constants will depend at most only on the form $f$ and $\epsilon.$\\
In the paper \cite{C}, the following theorems are established.

\begin{theoremA}
For any $\epsilon>0$, we have 
\[\sum_{n\leq x}\lambda_{f\otimes f\otimes f}(n)^2=xP(\log x)+O_{f,\ \epsilon}(x^{(175/181)+\epsilon})            \] where $P(t)$ is a polynomial of degree $4$.\label{TA}
\end{theoremA}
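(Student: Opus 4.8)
The plan is to write $L_f(s)$ as a product of automorphic $L$-functions times a harmless Euler factor and then to extract the main term by contour integration. Reading off the Euler factors in the definition shows that $\lambda_{f\otimes f\otimes f}$ is the Dirichlet coefficient of $L(\mathrm{sym}^3 f,s)\,L(f,s)^2$, i.e.\ $f\otimes f\otimes f$ is the isobaric sum $\mathrm{sym}^3 f\boxplus f\boxplus f$ ($4+2+2=8$ Satake parameters of modulus $1$). Since $|\lambda_{f\otimes f\otimes f}(n)|\ll_\epsilon n^\epsilon$, $L_f(s)$ has an Euler product for $\Re s>1$; and because for any finite multiset of unimodular numbers $\{\gamma_i\}$ the series $\sum_k|h_k(\gamma)|^2X^k$ and $\prod_{i,j}(1-\gamma_i\bar\gamma_jX)^{-1}$ agree in their first two Taylor coefficients, the local factor of $L_f(s)$ equals that of the degree-$64$ Rankin--Selberg convolution of $\mathrm{sym}^3 f\boxplus f\boxplus f$ with itself, up to a factor $\prod_p\bigl(1+O(p^{-2s})\bigr)$. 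Using $\mathrm{sym}^3 f\otimes\mathrm{sym}^3 f=\mathrm{sym}^6 f\boxplus\mathrm{sym}^4 f\boxplus\mathrm{sym}^2 f\boxplus\mathbf{1}$, $\mathrm{sym}^3 f\otimes f=\mathrm{sym}^4 f\boxplus\mathrm{sym}^2 f$ and $f\otimes f=\mathrm{sym}^2 f\boxplus\mathbf{1}$ (with the three convolution pieces occurring with multiplicities $1$, $4$, $4$) one obtains
\[
L_f(s)=\zeta(s)^5\,L(\mathrm{sym}^2 f,s)^9\,L(\mathrm{sym}^4 f,s)^5\,L(\mathrm{sym}^6 f,s)\,\mathcal U(s),
\]
where $\mathcal U(s)=\prod_p\bigl(1+O(p^{-2s})\bigr)$ is absolutely convergent, bounded, and non-vanishing in $\Re s>\tfrac12$. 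Each $L(\mathrm{sym}^j f,s)$, $j=2,4,6$, is entire of degree $j+1$ (Gelbart--Jacquet, Kim--Shahidi, Newton--Thorne), has a functional equation $s\leftrightarrow 1-s$, satisfies the convexity bound $L(\mathrm{sym}^j f,\tfrac12+it)\ll_{f,\epsilon}(1+|t|)^{(j+1)/4+\epsilon}$ and the mean-square bound $\int_0^T|L(\mathrm{sym}^j f,\tfrac12+it)|^2\,dt\ll_{f,\epsilon}T^{(j+1)/2+\epsilon}$, and is non-zero on $\Re s=1$. Hence $L_f(s)$ continues meromorphically to $\Re s>\tfrac12$ with a single pole there, of order exactly $5$ at $s=1$, coming from $\zeta(s)^5$.

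\emph{Perron's formula and the main term.} By the truncated Perron formula, using $\lambda_{f\otimes f\otimes f}(n)^2\ll_\epsilon n^\epsilon$ for the tails,
\[
\sum_{n\le x}\lambda_{f\otimes f\otimes f}(n)^2=\frac{1}{2\pi i}\int_{1+\epsilon-iT}^{1+\epsilon+iT}L_f(s)\,\frac{x^s}{s}\,ds+O\!\left(\frac{x^{1+\epsilon}}{T}\right).
\]
I would shift the contour to $\Re s=\tfrac12+\epsilon$, which is as far as one may go, $\tfrac12$ being the abscissa of convergence of $\mathcal U$. The pole at $s=1$ contributes the residue of $L_f(s)x^s/s$, namely $xP(\log x)$ with $P$ a polynomial of degree $4$ (the pole having order $5$) whose leading coefficient is a positive constant built from the values $L(\mathrm{sym}^j f,1)$ and $\mathcal U(1)$. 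It remains to bound the integral over $\Re s=\tfrac12+\epsilon$ and the two horizontal segments.

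\emph{Bounding the shifted integral.} One must estimate
\[
x^{1/2+\epsilon}\int_{1}^{T}\bigl|\zeta(\tfrac12+it)\bigr|^{5}\bigl|L(\mathrm{sym}^2 f,\tfrac12+it)\bigr|^{9}\bigl|L(\mathrm{sym}^4 f,\tfrac12+it)\bigr|^{5}\bigl|L(\mathrm{sym}^6 f,\tfrac12+it)\bigr|\,\frac{dt}{t},
\]
the horizontal segments being handled in the same way with $x^{1/2+\epsilon}$ replaced by $x^{1+\epsilon}/T$. I would split this product and apply Hölder's inequality so as to use, factor by factor, the strongest available input: the fourth (or twelfth) power moment and the subconvexity bound for $\zeta$, the mean-square bounds for the symmetric power $L$-functions, and the convexity bound for what is left. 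This produces a bound $\ll x^{1/2+\epsilon}T^{B+\epsilon}$ with an explicit exponent $B$, and choosing $T$ to balance it against $x^{1+\epsilon}/T$ yields $O_{f,\epsilon}(x^{175/181+\epsilon})$.

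\emph{The main obstacle.} The crux is the last step. Since $L_f(s)$ has total analytic degree $64$ and, on the critical line, both the degree-$7$ factor $L(\mathrm{sym}^6 f,s)$ and most of the nine copies of $L(\mathrm{sym}^2 f,s)$ are accessible only through convexity, the quality of the final exponent is dictated entirely by how efficiently the available moment and subconvexity estimates are distributed through Hölder's inequality; the value $175/181$ is precisely what this optimisation delivers, and any sharpening of subconvexity for symmetric power $L$-functions would improve it. A secondary, essentially routine, point is to make the correction Euler factor $\mathcal U(s)$ and its region of convergence precise and to control the error terms in Perron's formula.
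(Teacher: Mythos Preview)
Your overall strategy matches the paper's: factor $L_f(s)$ into automorphic $L$-functions times a correction convergent for $\Re s>\tfrac12$, apply Perron, shift the contour past $s=1$, pick up the order-$5$ pole, and bound the remaining integrals via H\"older. Your factorization is correct and equivalent to the paper's Lemma~3.6,
\[
L_f(s)=\zeta(s)^5\,L(\mathrm{sym}^2 f,s)^8\,L(\mathrm{sym}^4 f,s)^4\,L(\mathrm{sym}^4 f\otimes\mathrm{sym}^2 f,s)\,U(s),
\]
via the Clebsch--Gordan identity $\mathrm{sym}^4\otimes\mathrm{sym}^2\cong\mathrm{sym}^6\boxplus\mathrm{sym}^4\boxplus\mathrm{sym}^2$. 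Keeping the degree-$15$ Rankin--Selberg factor intact, as the paper does, has the advantage that only Rankin--Selberg theory is needed for its analytic continuation, functional equation, and convexity bound; the Newton--Thorne automorphy of $\mathrm{sym}^6 f$ that you invoke postdates the result you are reproving.

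The substantive gap is the final step. You shift to $\Re s=\tfrac12+\epsilon$ and then simply assert that a suitable H\"older splitting gives an exponent $B$ which balances against $x^{1+\epsilon}/T$ to yield $x^{175/181+\epsilon}$; but the entire content of the theorem is this numerical optimisation, and you have specified neither the H\"older exponents, nor which factors are estimated pointwise versus in mean square, nor computed $B$. The paper itself only \emph{quotes} Theorem~A from \cite{C}; its own argument (for the sharper Theorem~2.1) shifts instead to $\Re s=\tfrac57$ precisely in order to exploit Ivi\'c's estimate $\int_0^T|\zeta(\tfrac57+it)|^{12}\,dt\ll T^{1+\epsilon}$ (Lemma~3.2), pulls out all copies of $L(\mathrm{sym}^2 f,s)$ pointwise via Li's subconvexity bound $\ll(1+|t|)^{11(1-\sigma)/8+\epsilon}$ (Lemma~3.3), applies the degree-weighted mean square (Lemma~3.1) to the remaining factors, and thereby obtains $695/719<175/181$. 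A secondary omission concerns the horizontal segments: the paper does not bound $|\zeta|^5$ there by convexity but instead chooses $T=T^\ast$ via the Ramachandra--Sankaranarayanan lemma (Lemma~3.5) so that $\zeta(\sigma+iT^\ast)\ll T^\epsilon$ uniformly for $\tfrac12\le\sigma\le 2$.
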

\begin{theoremA}
For any $\epsilon>0$, we have 
\begin{equation*}
    \sum_{n\leq x} \lambda_{{\rm{sym}}^2 f\otimes f}(n)^2=xQ(\log x)+O_{f,\ \epsilon}\big(x^{(17/18)+\epsilon}\big)
\end{equation*}
where $Q(t)$ is a polynomial of degree $1$.\label{TB}
\end{theoremA}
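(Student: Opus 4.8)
The plan is to realise the Dirichlet series $D_f(s)=\sum_{n\ge1}\lambda_{\mathrm{sym}^2f\otimes f}(n)^2n^{-s}$ as an (essentially) automorphic $L$-function, read off its pole at $s=1$, and then convert this into an asymptotic for the partial sums by the truncated Perron formula together with a contour shift. The first move is to factor $D_f(s)$. By the Clebsch--Gordan rule $\mathrm{sym}^2f\otimes f=\mathrm{sym}^3f\boxplus f$, the local parameters of $L(\mathrm{sym}^2f\otimes f,s)$ at $p$ are the six numbers $\alpha_p^3,\alpha_p,\alpha_p,\beta_p,\beta_p,\beta_p^3$, so $\lambda_{\mathrm{sym}^2f\otimes f}$ is multiplicative with $\lambda_{\mathrm{sym}^2f\otimes f}(n)\ll_\varepsilon n^\varepsilon$ by Deligne's bound. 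Comparing the Euler product of $D_f(s)$ factor-by-factor with that of the Rankin--Selberg square $L\big((\mathrm{sym}^2f\otimes f)\otimes(\mathrm{sym}^2f\otimes f),s\big)$ — the two agree up to $O(p^{-2s})$ at every prime — and decomposing the tensor square of $\mathrm{sym}^3f\boxplus f$ into isobaric constituents via $\mathrm{sym}^3\otimes\mathrm{sym}^3=\mathrm{sym}^6\oplus\mathrm{sym}^4\oplus\mathrm{sym}^2\oplus\mathbf 1$, $\mathrm{sym}^3\otimes\mathrm{sym}^1=\mathrm{sym}^4\oplus\mathrm{sym}^2$, $\mathrm{sym}^1\otimes\mathrm{sym}^1=\mathrm{sym}^2\oplus\mathbf 1$, one obtains
\begin{equation*}
D_f(s)=\zeta(s)^2\,L(\mathrm{sym}^2f,s)^4\,L(\mathrm{sym}^4f,s)^3\,L(\mathrm{sym}^6f,s)\,U(s),
\end{equation*}
where $U(s)=\prod_pU_p(s)$ with $U_p(s)=1+O(p^{-2s})$, so $U$ is given by an absolutely convergent Euler product and is therefore holomorphic, bounded and nonvanishing in $\Re s\ge\tfrac12+\delta$.

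Next I would bring in the analytic theory of the symmetric power $L$-functions. By Gelbart--Jacquet, Kim--Shahidi and Kim (and, for $\mathrm{sym}^6$, Newton--Thorne, or alternatively the identity $L(\mathrm{sym}^4f\otimes\mathrm{sym}^2f,s)=L(\mathrm{sym}^6f,s)L(\mathrm{sym}^4f,s)L(\mathrm{sym}^2f,s)$ together with $GL(5)\times GL(3)$ Rankin--Selberg theory), each $L(\mathrm{sym}^{2j}f,s)$ for $j=1,2,3$ extends to an entire function satisfying a standard functional equation and is nonzero at $s=1$. Hence $D_f(s)$ continues to $\Re s>\tfrac12$ with a single pole there, located at $s=1$ and of order exactly two. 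Writing $D_f(s)=\zeta(s)^2G(s)$ with $G$ holomorphic and nonvanishing in a neighbourhood of $s=1$, the residue $\mathrm{Res}_{s=1}\big(D_f(s)x^s/s\big)$ equals $xQ(\log x)$ with $Q$ a polynomial of degree $1$ and leading coefficient $G(1)=L(\mathrm{sym}^2f,1)^4L(\mathrm{sym}^4f,1)^3L(\mathrm{sym}^6f,1)U(1)>0$; this is precisely the main term of the theorem. For the error term one needs growth in the vertical direction: the convexity principle gives $|D_f(\sigma+it)|\ll_\varepsilon(1+|t|)^{E(\sigma)+\varepsilon}$ for $\tfrac12+\delta\le\sigma\le1+\varepsilon$ with $E(1)=0$ and $E(\tfrac12)=\tfrac14\big(2\cdot1+4\cdot3+3\cdot5+1\cdot7\big)=9$ (the degree of the product is $36$), and this must be improved near $\Re s=\tfrac12$ by inserting the known subconvex, or second-moment, bounds for $L(\mathrm{sym}^2f,s)$ and for $L(\mathrm{sym}^4f,s),L(\mathrm{sym}^6f,s)$ in the $t$-aspect.

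Finally I would run the truncated Perron formula with $c=1+1/\log x$ and a parameter $2\le T\le x$, shift the contour to $\Re s=\sigma_0$ for a suitable $\tfrac12<\sigma_0<1$, pick up the double pole at $s=1$ (which contributes $xQ(\log x)$), and estimate the horizontal segments at height $\pm T$ and the vertical segment on $\Re s=\sigma_0$ using the bounds above; this yields an error $\ll x^{1+\varepsilon}/T+x^{\sigma_0}T^{E(\sigma_0)+\varepsilon}$, and choosing $T$ to balance the two terms and then optimising over $\sigma_0$ produces $O_{f,\varepsilon}\big(x^{17/18+\varepsilon}\big)$.

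The algebra of the first step and the contour manipulations of the last step are routine. The genuine obstacle is the middle step: obtaining a bound for $D_f(s)$ in the strip $\tfrac12<\Re s<1$ that is strong enough. A bare convexity estimate on the degree-$36$ product only delivers an exponent of the shape $19/20$, so to reach $17/18$ one has to exploit subconvex or mean-square information about the symmetric power $L$-functions and then carry out the optimisation of $\sigma_0$ and $T$ against these mixed estimates — this bookkeeping is the delicate part. A secondary technical point is that the correction factor $U(s)$ is only controlled to the right of $\Re s=\tfrac12$, which forces the shift to $\tfrac12+\delta$ and is the source of the $+\delta$, hence of the $+\varepsilon$, in the final exponent.
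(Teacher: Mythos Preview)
Your overall strategy---factor $D_f(s)$, apply Perron, shift the contour, pick up the double pole at $s=1$, and balance the error terms---is exactly the one the paper uses. Note, however, that the paper does not itself prove Theorem~B: it quotes the result from \cite{C} and then proves the sharper Theorem~2.2 by the same template. Two points of comparison are worth making.

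First, your factorisation and the paper's Lemma~3.7 are equivalent but packaged differently. You write
\[
D_f(s)=\zeta(s)^2L(\mathrm{sym}^2f,s)^4L(\mathrm{sym}^4f,s)^3L(\mathrm{sym}^6f,s)\,U(s),
\]
whereas the paper keeps the last block as the $GL(5)\times GL(3)$ Rankin--Selberg factor,
\[
D_f(s)=\zeta(s)^2L(\mathrm{sym}^2f,s)^3L(\mathrm{sym}^4f,s)^2L(\mathrm{sym}^4f\otimes\mathrm{sym}^2f,s)\,V(s).
\]
Since $\mathrm{sym}^4\otimes\mathrm{sym}^2=\mathrm{sym}^6\boxplus\mathrm{sym}^4\boxplus\mathrm{sym}^2$, the two agree. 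The paper's version is slightly more economical in that the analytic continuation, functional equation, and convexity bound for $L(\mathrm{sym}^4f\otimes\mathrm{sym}^2f,s)$ follow directly from Rankin--Selberg theory, without invoking automorphy of $\mathrm{sym}^6f$ (Newton--Thorne) or the workaround you mention.

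Second, you leave the passage from the convexity exponent $19/20$ down to $17/18$ as ``bookkeeping'' with unspecified subconvex or mean-square inputs. This is the only substantive gap: the exponent depends entirely on which bounds you feed in, and different choices give different answers. For the record, the paper's proof of Theorem~2.2 shifts to $\Re s=\tfrac12+\epsilon$ and uses Li's bound $L(\mathrm{sym}^2f,\tfrac12+it)\ll t^{11/16+\epsilon}$ (Lemma~3.3), Bourgain's $\zeta(\tfrac12+it)\ll t^{13/84+\epsilon}$ (Lemma~3.4), the generic mean-square estimate $\int_T^{2T}|\mathfrak L(\sigma+it)|^2\,dt\ll T^{m(1-\sigma)+\epsilon}$ for the degree-$10$ and degree-$15$ factors (Lemma~3.1), and the Ramachandra--Sankaranarayanan device (Lemma~3.5) to control $\zeta$ on the horizontal segments, arriving at $2729/2897<17/18$. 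To actually recover $17/18$ you would need to specify a comparable (but weaker) set of inputs and carry out the optimisation.
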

The aim of this article is to improve Theorems \textbf{A} and \textbf{B}.\\
More precisely we prove:
\begin{theorem}
For any $\epsilon>0$ and $f\in H_k^*$, we have 
\[\sum_{n\leq x}\lambda_{f\otimes f\otimes f}(n)^2=xP(\log x)+O_{f,\ \epsilon}(x^{(695/719)+\epsilon})\] where $P(t)$ is a polynomial of degree $4$.\label{T2.1}
\end{theorem}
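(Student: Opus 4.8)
The plan is to express the Dirichlet series $L_f(s)=\sum_{n\ge1}\lambda_{f\otimes f\otimes f}(n)^{2}n^{-s}$ as a product of standard automorphic $L$-functions times a harmless correction factor, and then run a truncated Perron argument with a shifted contour. From the $SL_2$ Clebsch--Gordan rule $\mathrm{sym}^{a}\otimes\mathrm{sym}^{b}\cong\bigoplus_{j=0}^{\min(a,b)}\mathrm{sym}^{a+b-2j}$ one has the factorization $L(f\otimes f\otimes f,s)=L(\mathrm{sym}^{3}f,s)L(f,s)^{2}$, equivalently the isobaric identity $f\otimes f\otimes f\cong\mathrm{sym}^{3}f\boxplus f\boxplus f$ and $\lambda_{f\otimes f\otimes f}=\lambda_{\mathrm{sym}^{3}f}*\lambda_{f}*\lambda_{f}$. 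Comparing the Euler product of $L_f(s)$ prime by prime with that of the Rankin--Selberg square, one obtains $L_f(s)=L\big((\mathrm{sym}^{3}f\boxplus f\boxplus f)\times(\mathrm{sym}^{3}f\boxplus f\boxplus f),s\big)\,U(s)$, where $U(s)=\prod_{p}\big(1+O(p^{-2s})\big)$; since Deligne's bound makes every Hecke eigenvalue in sight $O(1)$, $U(s)$ converges absolutely, hence is bounded, in $\Re s\ge\tfrac12+\delta$. Expanding the Rankin--Selberg square by Clebsch--Gordan ($\mathrm{sym}^{1}\otimes\mathrm{sym}^{1}=\mathrm{sym}^{2}\oplus\mathrm{sym}^{0}$, $\mathrm{sym}^{3}\otimes\mathrm{sym}^{1}=\mathrm{sym}^{4}\oplus\mathrm{sym}^{2}$, $\mathrm{sym}^{3}\otimes\mathrm{sym}^{3}=\mathrm{sym}^{6}\oplus\mathrm{sym}^{4}\oplus\mathrm{sym}^{2}\oplus\mathrm{sym}^{0}$) yields
\[
L_f(s)=\zeta(s)^{5}\,L(\mathrm{sym}^{2}f,s)^{9}\,L(\mathrm{sym}^{4}f,s)^{5}\,L(\mathrm{sym}^{6}f,s)\,U(s),
\]
an $L$-function of degree $64$ whose factors are understood cuspidal automorphic $L$-functions ($\mathrm{sym}^{2}f$: Gelbart--Jacquet; $\mathrm{sym}^{4}f$: Kim; $\mathrm{sym}^{6}f$: Newton--Thorne --- or, avoiding the last, one may keep $L(\mathrm{sym}^{3}f\times\mathrm{sym}^{3}f,s)$ intact and use only Kim--Shahidi). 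In particular $L_f(s)$ continues to a neighbourhood of the line $\Re s=1$, with a single pole there, at $s=1$, of order $5$, coming from $\zeta(s)^{5}$; its residue contributes exactly the main term $xP(\log x)$ with $\deg P=4$.

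Next I would assemble the analytic input for the four factors: their functional equations $s\leftrightarrow1-s$ together with the gamma factors, hence, by the maximum modulus principle (Phragm\'en--Lindel\"of), the convexity bounds $L(\mathrm{sym}^{2j}f,\sigma+it)\ll_{f,\epsilon}(1+|t|)^{\frac{2j+1}{2}(1-\sigma)+\epsilon}$ on $\tfrac12\le\sigma\le1$ for $j=0,1,2,3$ (with $L(\mathrm{sym}^{0}f,s)=\zeta(s)$); the standard zero-free regions; the best available subconvex exponents (Weyl or Bourgain for $\zeta$, $GL(3)$-subconvexity for $\mathrm{sym}^{2}f$); and the classical power moments, in particular $\int_{0}^{T}|\zeta(\tfrac12+it)|^{4}\,dt\ll T^{1+\epsilon}$, $\int_{0}^{T}|\zeta(\tfrac12+it)|^{12}\,dt\ll T^{2+\epsilon}$, $\int_{0}^{T}|L(\mathrm{sym}^{2}f,\tfrac12+it)|^{2}\,dt\ll_{f}T^{1+\epsilon}$, together with the mean-square bounds for $L(\mathrm{sym}^{4}f,s)$ and $L(\mathrm{sym}^{6}f,s)$ afforded by their approximate functional equations. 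Distributing the exponents $5,9,5,1$ among the factors and applying H\"older's inequality then gives an estimate $\int_{-T}^{T}|L_f(\sigma+it)|\,\tfrac{dt}{1+|t|}\ll_{f,\epsilon}T^{\,\mu(\sigma)+\epsilon}$ on $\tfrac12+\delta\le\sigma\le1$, with $\mu$ piecewise linear, $\mu(1)=0$, and $\mu(\tfrac12)$ strictly below the pure-convexity value $\tfrac{64}{4}=16$; the precise shape of $\mu$ is what fixes the exponent $695/719$.

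Then I would apply the truncated Perron formula to $\sum_{n\le x}\lambda_{f\otimes f\otimes f}(n)^{2}$. The summands are non-negative, $\sum_{n\le x}\lambda_{f\otimes f\otimes f}(n)^{2}\ll_{f}x^{1+\epsilon}$ (Tauberian, from the order-$5$ pole) and $|\lambda_{f\otimes f\otimes f}(n)|\ll n^{\epsilon}$, so the truncation error is $\ll x^{1+\epsilon}/T$. Shifting the line of integration from $\Re s=1+\epsilon$ to $\Re s=\sigma_{1}\in(\tfrac12,1)$ --- the zero-free regions of $\zeta$, $L(\mathrm{sym}^{2}f)$, $L(\mathrm{sym}^{4}f)$, $L(\mathrm{sym}^{6}f)$ guaranteeing that the contour meets no singularity other than $s=1$ --- crosses only the pole at $s=1$, whose residue is the main term. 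The horizontal segments at $\Im s=\pm T$ and the vertical segment at $\Re s=\sigma_{1}$ are controlled by the mean-value estimate above, so the total error is $\ll_{f,\epsilon}x^{\sigma_{1}}T^{\mu(\sigma_{1})+\epsilon}+x^{1+\epsilon}/T$; taking $\delta\asymp1/\log x$ makes the boundedness of $U$ cost only $x^{\epsilon}$. Optimising over $T$ and $\sigma_{1}$ (the optimal $\sigma_{1}$ lying close to $\tfrac12$) produces the stated error term $O_{f,\epsilon}(x^{695/719+\epsilon})$, while reading off the residue at $s=1$ gives the polynomial $P$, of degree $4$.

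The hard part is the treatment of the higher symmetric-power blocks $L(\mathrm{sym}^{4}f,s)^{5}$ and $L(\mathrm{sym}^{6}f,s)$: only convexity and essentially trivial mean-square bounds are available for these, so they dominate $\mu(\sigma)$ and hence the error term. Squeezing out the improvement over Theorem~\textbf{A} therefore depends on feeding in the sharpest unconditional power moments for the $\zeta(s)^{5}$ and $L(\mathrm{sym}^{2}f,s)^{9}$ blocks --- the twelfth moment of $\zeta$ and the second moment of $L(\mathrm{sym}^{2}f,s)$ being the decisive inputs --- keeping the convexity losses from $\mathrm{sym}^{4}$ and $\mathrm{sym}^{6}$ as small as possible, and then carrying out the two-parameter balancing with care. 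A subsidiary technical point, needed at the outset, is to verify that $U(s)$ really is given by an absolutely convergent Euler product for $\Re s>\tfrac12$, i.e. that the local factors of $L_f$ and of the Rankin--Selberg square agree modulo $p^{-2s}$; this is exactly where the identity $f\otimes f\otimes f\cong\mathrm{sym}^{3}f\boxplus f\boxplus f$ and Deligne's bound are used.
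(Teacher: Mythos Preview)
Your overall strategy --- factor $L_f(s)$ into standard automorphic $L$-functions times a Dirichlet series absolutely convergent in $\Re s>\tfrac12$, apply truncated Perron, shift the contour, pick up the order-$5$ pole at $s=1$, and bound the remaining integrals --- is exactly the paper's. Your factorization is also equivalent to the paper's: since $\mathrm{sym}^{4}\otimes\mathrm{sym}^{2}\cong\mathrm{sym}^{6}\oplus\mathrm{sym}^{4}\oplus\mathrm{sym}^{2}$, the paper's identity
\[
L_f(s)=\zeta(s)^{5}\,L(\mathrm{sym}^{2}f,s)^{8}\,L(\mathrm{sym}^{4}f,s)^{4}\,L(\mathrm{sym}^{4}f\otimes\mathrm{sym}^{2}f,s)\,U(s)
\]
is your $\zeta^{5}(\mathrm{sym}^{2})^{9}(\mathrm{sym}^{4})^{5}(\mathrm{sym}^{6})$ with one copy each of $\mathrm{sym}^{2},\mathrm{sym}^{4},\mathrm{sym}^{6}$ repackaged into the Rankin--Selberg factor. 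The paper's grouping buys exactly what you anticipated: one gets the analytic continuation and convexity bound for the degree-$15$ piece from Rankin--Selberg theory alone, without invoking automorphy of $\mathrm{sym}^{6}f$.

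Where your plan diverges, and where it stops short of actually producing the exponent $695/719$, is in the specific analytic inputs and the contour location. The paper shifts to $\sigma_{1}=\tfrac57$, not ``close to $\tfrac12$''; this is dictated by Ivi\'c's estimate $\int_{0}^{T}|\zeta(\tfrac57+it)|^{12}\,dt\ll T^{1+\epsilon}$ (the critical-line twelfth moment $\ll T^{2+\epsilon}$ you cite is too weak here). For the $L(\mathrm{sym}^{2}f,s)^{8}$ block the paper uses Li's \emph{pointwise} subconvexity $L(\mathrm{sym}^{2}f,\sigma+it)\ll(1+|t|)^{(11/8)(1-\sigma)+\epsilon}$, not a mean-square bound. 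The vertical integral is then handled by a H\"older split with exponents $(\tfrac{5}{12},\tfrac12,\tfrac{1}{12})$ applied to $|\zeta|^{12}$, $|L(\mathrm{sym}^{2}f)^{8}L(\mathrm{sym}^{4}f)^{4}|^{2}$, and $|L(\mathrm{sym}^{4}f\otimes\mathrm{sym}^{2}f)|^{12}$, giving $J_{1}\ll x^{5/7}T^{635/84+\epsilon}$; balancing against $x^{1+\epsilon}/T$ at $T=x^{24/719}$ yields precisely $695/719$.

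There is also one device you do not mention at all, which the paper singles out as ``vital'': the Ramachandra--Sankaranarayanan lemma (Lemma~3.5). Before shifting the contour one replaces $T$ by a nearby $T^{*}\in[T,T+T^{1/3}]$ on which $|\zeta(\sigma+iT^{*})|\ll\exp((\log\log T)^{2})\ll T^{\epsilon}$ uniformly for $\tfrac12\le\sigma\le 2$. This suppresses the $\zeta^{5}$ contribution on the horizontal segments to $T^{\epsilon}$. If instead one uses only Bourgain's pointwise bound for $\zeta$ on the horizontals, a short computation shows the horizontal contribution $x^{5/7}T^{1115/147+\epsilon}$ (slightly) exceeds the vertical one, and the optimized exponent comes out to $610/631$ rather than $695/719$. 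So your outline, with the inputs as you listed them, would give an exponent strictly less than $1$ but not the one claimed; to recover $695/719$ you need the contour at $\tfrac57$, Ivi\'c's twelfth moment there, Li's pointwise bound for $\mathrm{sym}^{2}f$, and the $T^{*}$ trick for the horizontals.
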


\begin{theorem}
For any $\epsilon>0$ and $f\in H_k^*$, we have 
\begin{equation*}
    \sum_{n\leq x} \lambda_{{\rm{sym}}^2f\otimes f}(n)^2=xQ(\log x)+O_{f,\  \epsilon}\big(x^{(2729/2897)+\epsilon}\big)
\end{equation*}
where $Q(t)$ is a polynomial of degree $1$.\label{T2.2}
\end{theorem}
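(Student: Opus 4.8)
\medskip
\noindent\emph{Sketch of the proof.}
The starting point is to re-express $D_f(s)$ in terms of standard automorphic $L$-functions. Comparing Euler factors at every prime and using $\alpha_p\beta_p=1$, one gets the Clebsch--Gordan identity $L(\mathrm{sym}^2 f\otimes f,s)=L(\mathrm{sym}^3 f,s)L(f,s)$, so that $\mathrm{sym}^2 f\otimes f$ is the isobaric sum $\mathrm{sym}^3 f\boxplus f$. Squaring the Dirichlet coefficients and comparing Euler factors once more, now using in addition
\[
L(\mathrm{sym}^3 f\times\mathrm{sym}^3 f,s)=\zeta(s)L(\mathrm{sym}^2 f,s)L(\mathrm{sym}^4 f,s)L(\mathrm{sym}^6 f,s),
\]
together with $L(\mathrm{sym}^3 f\times f,s)=L(\mathrm{sym}^2 f,s)L(\mathrm{sym}^4 f,s)$ and $L(f\times f,s)=\zeta(s)L(\mathrm{sym}^2 f,s)$, one obtains
\[
D_f(s)=\zeta(s)^2\,L(\mathrm{sym}^2 f,s)^4\,L(\mathrm{sym}^4 f,s)^3\,L(\mathrm{sym}^6 f,s)\,U(s),
\]
where $U(s)=\prod_p U_p(p^{-s})$ with each local factor $U_p(x)=1+O(x^2)$, so that $U(s)$ is given by an absolutely convergent Euler product — hence holomorphic and uniformly bounded — in the half-plane $\Re(s)>\tfrac{1}{2}$. (Checking $U_p(x)=1+O(x^2)$ amounts to the polynomial identity $(x^3-x)^2=2+4(x^2-1)+3(x^4-3x^2+1)+(x^6-5x^4+6x^2-1)$ with $x=\alpha_p+\beta_p$.) Since $L(\mathrm{sym}^j f,s)$ is entire for $j=2,4,6$ (Gelbart--Jacquet, Kim--Shahidi, Newton--Thorne) and non-zero at $s=1$, the only singularity of $D_f(s)$ in $\Re(s)\ge\tfrac{1}{2}+\delta$ is the double pole at $s=1$, whose contribution to $\sum_{n\le x}\lambda_{\mathrm{sym}^2 f\otimes f}(n)^2$ is the main term $xQ(\log x)$ with $\deg Q=1$.

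For the error term I would apply the truncated Perron formula at $c=1+1/\log x$ with a free parameter $1\le T\le x$,
\[
\sum_{n\le x}\lambda_{\mathrm{sym}^2 f\otimes f}(n)^2=\frac{1}{2\pi i}\int_{c-iT}^{c+iT}D_f(s)\,\frac{x^s}{s}\,ds+O\!\left(\frac{x^{1+\epsilon}}{T}\right),
\]
then shift the line of integration to $\Re(s)=\sigma$ for a suitable $\tfrac{1}{2}\le\sigma<1$, picking up the residue at $s=1$ and estimating the horizontal segments via the convexity bounds $L(\mathrm{sym}^j f,\sigma+it)\ll(1+|t|)^{(j+1)(1-\sigma)/2+\epsilon}$. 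Everything then reduces to bounding the vertical integral $\int_1^T|D_f(\sigma+it)|\,\tfrac{dt}{t}$. Here I would regroup the factors so as to use every available mean-value estimate — for instance $\zeta(s)^2L(\mathrm{sym}^2 f,s)^2=L(f\times f,s)^2$ and $L(\mathrm{sym}^2 f,s)^2L(\mathrm{sym}^4 f,s)^2=L(\mathrm{sym}^3 f\times f,s)^2$ — and apply Hölder's inequality, feeding in the fourth-moment bound $\int_1^T|\zeta(\sigma+it)|^4\,dt\ll T^{1+\epsilon}$, the best available mean-square estimates for $L(\mathrm{sym}^2 f,s)$, $L(f\times f,s)$ and $L(\mathrm{sym}^3 f\times f,s)$ (which follow from their approximate functional equations together with Rankin--Selberg bounds of the shape $\sum_{n\le N}\lambda_\bullet(n)^2\ll N^{1+\epsilon}$), and the convexity bounds for the remaining copies of $L(\mathrm{sym}^4 f,s)$ and $L(\mathrm{sym}^6 f,s)$. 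This yields $\int_1^T|D_f(\sigma+it)|\,\tfrac{dt}{t}\ll T^{g(\sigma)+\epsilon}$ for an explicit piecewise-affine $g$; choosing $T$ to balance $x^{1+\epsilon}/T$ against $x^{\sigma}T^{g(\sigma)+\epsilon}$ and then optimizing over $\sigma$ produces the exponent $\tfrac{2729}{2897}$. (An essentially equivalent route: factor $D_f(s)=\zeta(s)^2\mathcal G(s)$ with $\mathcal G$ holomorphic for $\Re(s)>\tfrac{1}{2}$, write $\lambda_{\mathrm{sym}^2 f\otimes f}(n)^2=\sum_{de=n}d(e)g(d)$, and combine the divisor estimate $\sum_{m\le y}d(m)=y\log y+(2\gamma-1)y+O(y^{131/416+\epsilon})$ with a bound $\sum_{d\le y}g(d)\ll y^{\theta+\epsilon}$ proved by a contour shift for $\mathcal G$, and then optimize; this also gives the stated exponent.)

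The heart of the matter — and the main obstacle — is the vertical-integral estimate. The symmetric power $L$-functions of degree $5$ and $7$, namely $\mathrm{sym}^4 f$ and $\mathrm{sym}^6 f$, have no usable subconvexity bounds, so the surplus copies must be handled by convexity, and the entire improvement over Theorem~\textbf{B} comes from (i) the regroupings into lower-degree Rankin--Selberg $L$-functions, for which sharper second-moment estimates are available, and (ii) performing the Hölder split together with the joint optimization over the Hölder exponents, the abscissa $\sigma$, and the truncation height $T$ as efficiently as possible; a treatment relying on convexity alone does not even recover Theorem~\textbf{B}. One must also verify that $U(s)$ and all the $L$-functions involved remain under control right up to the line $\Re(s)=\tfrac{1}{2}+\delta$, and input the sharpest unconditional mean-square estimates for $\zeta(s)$ and for $L(\mathrm{sym}^2 f,s)$.
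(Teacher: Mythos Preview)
Your general strategy (Perron, contour shift, H\"older on the vertical integral) is correct, and your factorization
\[
D_f(s)=\zeta(s)^2\,L(\mathrm{sym}^2 f,s)^4\,L(\mathrm{sym}^4 f,s)^3\,L(\mathrm{sym}^6 f,s)\,U(s)
\]
is valid. It differs from the paper's: there the degree-$15$ factor is kept intact as $L(\mathrm{sym}^4 f\otimes\mathrm{sym}^2 f,s)$, giving
\[
D_f(s)=\zeta(s)^2\,L(\mathrm{sym}^2 f,s)^3\,L(\mathrm{sym}^4 f,s)^2\,L(\mathrm{sym}^4 f\otimes\mathrm{sym}^2 f,s)\,V(s),
\]
whereas you have decomposed that factor further via $\mathrm{sym}^2\otimes\mathrm{sym}^4\cong\mathrm{sym}^2\boxplus\mathrm{sym}^4\boxplus\mathrm{sym}^6$. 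Either is fine; your version requires the automorphy of $\mathrm{sym}^6 f$, which the paper's does not.

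The genuine gap is in the analytic inputs. The exponent $\tfrac{2729}{2897}$ is not the output of a generic H\"older optimization; it is exactly
\[
\frac{125+32\kappa}{133+32\kappa}\quad\text{with }\kappa=\tfrac{13}{84},
\]
and the arithmetic encodes three specific ingredients that your proposal omits. First, the paper bounds $\zeta(\tfrac12+\epsilon+it)^2$ \emph{pointwise} via Bourgain's subconvexity $\zeta(\tfrac12+it)\ll t^{13/84+\epsilon}$; you instead propose the fourth-moment bound $\int_1^T|\zeta|^4\ll T^{1+\epsilon}$, which is a different (and here weaker) lever. Second, the paper bounds $L(\mathrm{sym}^2 f,\tfrac12+\epsilon+it)^3$ pointwise via Xiaoqing Li's subconvexity $L(\mathrm{sym}^2 f,\tfrac12+it)\ll t^{11/16+\epsilon}$, contributing the $\tfrac{33}{16}$ inside the $117/16$; you propose only mean-square estimates for $L(\mathrm{sym}^2 f,s)$. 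Third, on the horizontal segments the paper does \emph{not} use convexity for $\zeta$: it invokes the Ramachandra--Sankaranarayanan lemma to choose a height $T=T^{**}$ on which $|\zeta(\sigma+iT)|\ll T^\epsilon$ uniformly for $\tfrac12\le\sigma\le 2$, so the horizontal contribution is independent of $\kappa$. After pulling out $\zeta^2$ and $(\mathrm{sym}^2 f)^3$ pointwise, the paper simply applies Cauchy--Schwarz and the convexity-on-average bound $\int_T^{2T}|\mathfrak L(\tfrac12+it)|^2\,dt\ll T^{m/2+\epsilon}$ to the remaining degree-$10$ and degree-$15$ pieces; no regrouping into $L(f\times f)$ or $L(\mathrm{sym}^3 f\times f)$ is used. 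With only the inputs you list (fourth moment of $\zeta$, mean squares, convexity), you would obtain \emph{an} error term, but you have given no computation showing it equals $\tfrac{2729}{2897}$, and in fact it would not: that precise number is tied to the pair $(\tfrac{13}{84},\tfrac{11}{8})$.
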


\begin{remark}
\textit{It should be noted that the theorem of K. Ramachandra and A Sankaranarayanan [Lemma 3.5] plays a vital role in the proofs of Theorems {\bf 2.1} and {\bf 2.2}.
It is easy to check that $\frac{695}{719}<\frac{175}{181}$ and $\frac{2729}{2897}<\frac{17}{18}$. Thus Theorem {\bf 2.1} and Theorem {\bf 2.2} are unconditional improvements to the Theorem \textbf{A} and Theorem \textbf{B} respectively. Under the assumption of Lindel{\"o}f Hypothesis the error terms of Theorem {\bf 2.1} and Theorem {\bf 2.2} can slightly be improved for which we refer to section $4$.}
\nonumber
\end{remark}

\section{Lemmas}
\begin{lemma}
Suppose that $\mathfrak{L(s)}$ is a general L-function of degree $m$. Then, for any $\epsilon>0,$ we have \begin{equation}
    \int_T^{2T}|\mathfrak{L}(\sigma+it)|^2  \ dt\ll T^{\max\{m(1-\sigma),1\}+\epsilon}\label{E4}
\end{equation}
uniformly for $\frac{1}{2}\leq \sigma \leq1$ and $T>1$; and \begin{equation}
    \mathfrak{L}(\sigma+it)\ll(|t|+1)^{\frac{m}{2}(1-\sigma)+\epsilon}\label{E5}
\end{equation}
uniformly for $\frac{1}{2}\leq \sigma \leq1+\epsilon$ and $|t|\geq 1$.\label{L3.1}
\end{lemma}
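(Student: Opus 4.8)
\medskip

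\noindent\emph{Proof proposal.} The plan is to deduce both inequalities from the standard analytic package carried by a degree-$m$ L-function $\mathfrak{L}(s)=\sum_{n\ge 1}a(n)n^{-s}$: absolute convergence of the Dirichlet series for $\Re(s)>1$ (so $\mathfrak{L}(s)\ll 1$ there), meromorphic continuation with at most a pole at $s=1$, a functional equation $\gamma(s)\mathfrak{L}(s)=\omega\,\overline{\gamma}(1-s)\,\widetilde{\mathfrak{L}}(1-s)$ in which $\gamma(s)$ is a fixed power of $\pi$ times a product of $m/2$ Gamma-factors with bounded parameters, and the Rankin--Selberg type average bound $\sum_{n\le x}|a(n)|^2\ll_{\epsilon}x^{1+\epsilon}$ (which holds for every L-function appearing in this paper). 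Writing $\chi(s)=\omega\,\overline{\gamma}(1-s)/\gamma(s)$, Stirling's formula gives $|\chi(\sigma+it)|\asymp(|t|+1)^{m(\frac12-\sigma)}$, and the analytic conductor satisfies $q(t)\asymp(|t|+1)^{m}$.

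\smallskip

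For \eqref{E5} I would use the Phragm\'en--Lindel\"of (maximum modulus) principle on the strip $-\epsilon\le\Re(s)\le 1+\epsilon$. On $\Re(s)=1+\epsilon$ absolute convergence gives $\mathfrak{L}(s)\ll 1$. On $\Re(s)=-\epsilon$ the functional equation, the bound $\widetilde{\mathfrak{L}}(1+\epsilon-it)\ll 1$, and the Stirling estimate for $\chi$ give $\mathfrak{L}(-\epsilon+it)\ll(|t|+1)^{\frac m2(1+2\epsilon)}$. (If $\mathfrak{L}$ has a pole at $s=1$, apply this to $(s-1)\mathfrak{L}(s)$; since $|t|\ge 1$ the extra factor is harmless.) Convexity then interpolates the exponent linearly across the strip, producing $\mathfrak{L}(\sigma+it)\ll(|t|+1)^{\frac m2(1-\sigma)+\epsilon}$ for all $-\epsilon\le\sigma\le 1+\epsilon$, in particular for $\tfrac12\le\sigma\le 1+\epsilon$.

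\smallskip

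For \eqref{E4} I would fix $\sigma\in[\tfrac12,1]$, dyadically localise $t\asymp T$, and invoke a smoothed approximate functional equation: for $t\in[T,2T]$, $\mathfrak{L}(\sigma+it)=\sum_{n}a(n)n^{-\sigma-it}V(n/T^{m/2})+\chi(\sigma+it)\sum_{n}\widetilde{a}(n)n^{-(1-\sigma)+it}W(n/T^{m/2})+O(T^{-A})$, with $V,W$ smooth and rapidly decaying, so each sum has effective length $\ll T^{m/2+\epsilon}$ while $|\chi(\sigma+it)|^2\asymp T^{m(1-2\sigma)}$. Squaring, integrating over $[T,2T]$, splitting each sum into dyadic blocks $n\sim N$, and applying the mean-value theorem for Dirichlet polynomials $\int_T^{2T}\big|\sum_{n\sim N}b(n)n^{-it}\big|^2\,dt\ll(T+N)\sum_{n\sim N}|b(n)|^2$ together with $\sum_{n\sim N}|a(n)|^2\ll N^{1+\epsilon}$, the first sum contributes $\ll(T+T^{m(1-\sigma)})T^{\epsilon}$ and the second, after inserting $|\chi|^2\asymp T^{m(1-2\sigma)}$, contributes $\ll T^{m(1-\sigma)+\epsilon}$. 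Since $\sigma\ge\tfrac12$ both are $\ll T^{\max\{m(1-\sigma),1\}+\epsilon}$, which is \eqref{E4}.

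\smallskip

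The delicate step is this second part: setting up the approximate functional equation so that the common cutoff $T^{m/2}$ is legitimate uniformly for $t\in[T,2T]$, controlling $\chi$ via Stirling, and arranging the dyadic bookkeeping so that \emph{every} block stays below $T^{\max\{m(1-\sigma),1\}+\epsilon}$; this ultimately reduces to the elementary exponent inequalities $1+\tfrac m2-m\sigma\le\max\{m(1-\sigma),1\}$ and $m(1-\sigma)\le\max\{m(1-\sigma),1\}$, both valid for $\sigma\ge\tfrac12$ and $m\ge 1$. Everything else --- Stirling, Phragm\'en--Lindel\"of, the Montgomery--Vaughan mean-value theorem, and the Rankin--Selberg average bound --- is classical, and for the specific degree-$m$ L-functions used later (built from the $L(\mathrm{sym}^{j}f,s)$ and the Rankin--Selberg factors $L(\mathrm{sym}^{i}f\otimes\mathrm{sym}^{j}f,s)$) all of these inputs are available in the literature.
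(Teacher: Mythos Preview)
The paper does not actually prove this lemma: it is stated as a standing fact about general degree-$m$ $L$-functions and then used freely, with the analytic background implicitly drawn from the Iwaniec--Kowalski reference~[E]. Your proposal supplies exactly the standard argument one would give --- Phragm\'en--Lindel\"of convexity from the functional equation for \eqref{E5}, and the approximate functional equation combined with the Montgomery--Vaughan mean-value theorem and the Rankin--Selberg coefficient bound for \eqref{E4} --- and the exponent bookkeeping you sketch is correct. So there is no discrepancy to report: you have filled in a proof the paper chose to omit, and your approach is the canonical one.
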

 For some L-functions with small degrees, we invoke either individual or average subconvexity bounds.
\begin{lemma}
For any $\epsilon >0$, we have \begin{equation}
    \int_0^T|\zeta\big(\frac{5}{7}+it\big)|^{12}\ dt\ll_\epsilon T^{1+\epsilon}\label{E6}
\end{equation} uniformly for $T\geq1$.\label{L3.2}
\end{lemma}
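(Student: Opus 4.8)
The plan is to treat this as a (sub)convexity-type power-moment estimate for $\zeta$, of the kind pioneered by Heath-Brown for the twelfth moment on the critical line; for the specific abscissa $\sigma=\tfrac57$ the bound is in fact available in the literature on mean values of $\zeta(s)$ (it is the point at which the twelfth power moment first becomes $\ll T^{1+\epsilon}$), so one legitimate option is simply to quote it. To indicate a direct route, I would first replace $\zeta(\tfrac57+it)^{6}$ by a Dirichlet polynomial via the approximate functional equation, so that, up to an admissible error term and a ``dual'' sum controlled by the functional equation and Stirling's formula, $\zeta(\tfrac57+it)^{6}=\sum_{n\le t^{3}}b_{n}(t)\,n^{-5/7-it}$ with coefficients $|b_{n}(t)|\le d_{6}(n)$. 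Squaring and integrating over $[0,T]$ then reduces the required estimate to a mean value of this polynomial, plus the analogous contribution of the dual sum.

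The second step is a dyadic decomposition of the polynomial into blocks $\sum_{n\sim N}b_{n}(t)\,n^{-5/7-it}$ with $N\le T^{3}$, followed by an estimate for the mean square of each block. The elementary mean value theorem for Dirichlet polynomials (in the sharp form of Montgomery and Vaughan) gives for a single block a bound of order $(T+N)\,T^{\epsilon}\sum_{n\sim N}d_{6}(n)^{2}N^{-10/7}$; this is already of the desired size $T^{1+\epsilon}$ for the short blocks, but for $N$ close to $T^{3}$ the term $N$ costs a power of $T$. To recover the loss one exploits the cancellation in the exponential sums $\sum_{n\sim N}(n/N)^{-it}$ by van der Corput / Weyl-type estimates, or, more efficiently, through Hal\'asz--Montgomery large-value inequalities and the reflection argument in the spirit of Heath-Brown. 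Summing the dyadic contributions and optimising the splitting yields $\int_{0}^{T}|\zeta(\tfrac57+it)|^{12}\,dt\ll_{\epsilon}T^{1+\epsilon}$; the value $\tfrac57$ is precisely where the worst dyadic range ceases to exceed $T^{1+\epsilon}$.

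The hard part will be exactly this control of the long dyadic ranges: the diagonal in the mean square of a sixth-power Dirichlet polynomial of length $\asymp T^{3}$ is genuinely of size about $T^{12/7}$, and squeezing it down to $T^{1+\epsilon}$ at $\sigma=\tfrac57$ uses the full strength of the twelfth-moment machinery — the $\ll T^{2+\epsilon}$ bound of Heath-Brown on the line $\sigma=\tfrac12$ together with the large-value / reflection technique, equivalently the sharpest available exponential-sum input. (Under the Lindel\"of Hypothesis the lemma would be immediate, since then $\int_{0}^{T}|\zeta(\sigma+it)|^{4}\,dt\ll T^{1+\epsilon}$ together with $\zeta(\sigma+it)\ll(|t|+1)^{\epsilon}$ give it at once; unconditionally the genuine twelfth-moment input is needed.) Everything else — the approximate functional equation, the dyadic splitting, and the treatment of the dual sum and the error terms — is routine, and in practice it is cleanest to invoke the established statement $\int_{0}^{T}|\zeta(\sigma+it)|^{12}\,dt\ll_{\epsilon}T^{1+\epsilon}$ valid for $\sigma\ge\tfrac57$ and specialise to $\sigma=\tfrac57$, $T\geq1$.
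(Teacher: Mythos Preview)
Your proposal is correct and, at bottom, aligned with the paper: the paper does not give an argument at all but simply cites the literature (Theorem~8.4 and equation~(8.87) of Ivi\'c \cite{D}), exactly the ``quote it'' option you flag at the outset. Your additional sketch of the Heath-Brown twelfth-moment machinery, approximate functional equation, dyadic splitting and large-value input is accurate background for why the bound holds at $\sigma=\tfrac57$, but none of it is needed here since the lemma is treated as a black-box citation.
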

\begin{proof}
See, Theorem $8.4$ and $(8.87)$ of \cite{D}.
\end{proof}
\begin{lemma}
For $f\in H_k^*$ and $\epsilon>0$, we have 
\begin{equation}
    L({\rm{sym}}^2 f,\ \sigma+it)\ll_{f,\epsilon}(|t|+1)^{\max\{\frac{11}{8}(1-\sigma),0\}+\epsilon}\label{E7}
\end{equation}
uniformly for $\frac{1}{2}\leq \sigma \leq2$ and $|t|\geq1$.\label{L3.3}
\end{lemma}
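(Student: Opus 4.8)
The plan is to regard $L(\mathrm{sym}^2 f,s)$ as what it is: an entire $L$-function of degree $3$. Indeed $\mathrm{sym}^2 f$ is a self-dual cuspidal automorphic representation of $GL(3)$ (Gelbart--Jacquet; cuspidality holds because $f$, being of level one, is not dihedral), and $L(\mathrm{sym}^2 f,s)$ is entire and satisfies a functional equation relating $s$ to $1-s$. I would then obtain \eqref{E7} by interpolating, via the Phragm\'en--Lindel\"of (maximum modulus) principle, between an estimate on the line $\Re s=1$ and a subconvexity estimate on the central line $\Re s=\tfrac12$. On the range $1\le\sigma\le2$ nothing is needed beyond $L(\mathrm{sym}^2 f,\sigma+it)\ll_{f,\epsilon}(|t|+1)^{\epsilon}$, since there $\max\{\tfrac{11}{8}(1-\sigma),0\}=0$; this bound is immediate from absolute convergence of the Euler product when $\sigma\ge1+\epsilon$, and for $1\le\sigma\le1+\epsilon$ it follows from the classical convexity estimate (or already from \eqref{E5} with $m=3$).

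The two anchor estimates are thus: on $\Re s=1$, the bound $L(\mathrm{sym}^2 f,1+it)\ll_{f,\epsilon}(|t|+1)^{\epsilon}$ just noted; and on $\Re s=\tfrac12$, the $t$-aspect subconvexity bound
\[
  L(\mathrm{sym}^2 f,\tfrac12+it)\ll_{f,\epsilon}(|t|+1)^{11/16+\epsilon},
\]
which is available because $\mathrm{sym}^2 f$ is a self-dual $GL(3)$ cusp form and the subconvex exponent $\tfrac{11}{16}=\tfrac34-\tfrac1{16}$ is known for such $L$-functions in the $t$-aspect. Before applying the maximum modulus principle one also checks that $L(\mathrm{sym}^2 f,s)$ is of finite order uniformly in the strip $\tfrac12\le\Re s\le1$; this is routine from the functional equation and Stirling's formula.

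Finally I would apply the Phragm\'en--Lindel\"of principle (equivalently, the maximum-modulus lemma of Ramachandra and Sankaranarayanan invoked in the Remark) to $L(\mathrm{sym}^2 f,s)$ across $\tfrac12\le\Re s\le1$. The exponent of $(|t|+1)$ is then majorised by the affine function of $\sigma$ taking the value $\tfrac{11}{16}+\epsilon$ at $\sigma=\tfrac12$ and $\epsilon$ at $\sigma=1$, namely $\tfrac{11}{16}\cdot2(1-\sigma)+\epsilon=\tfrac{11}{8}(1-\sigma)+\epsilon$; together with the bound for $\Re s\ge1$ this gives \eqref{E7} uniformly for $\tfrac12\le\sigma\le2$, $|t|\ge1$. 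The single deep ingredient is the subconvexity bound on the central line --- everything else (the reduction and the interpolation) is standard --- so that is where I expect the real work to lie; a lesser technical point is keeping the implied constants uniform in $t$ across the strip, which is dealt with by applying the principle on boxes of bounded height and controlling the horizontal sides with the polynomial-growth bound.
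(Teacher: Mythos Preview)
Your proposal is correct and matches the paper's approach: the paper simply cites Corollary~1.2 of \cite{F} (X.~Li), whose content is precisely the $t$-aspect subconvexity bound $L(\mathrm{sym}^2 f,\tfrac12+it)\ll_{f,\epsilon}(|t|+1)^{11/16+\epsilon}$ for self-dual $GL(3)$ $L$-functions, from which \eqref{E7} follows by the Phragm\'en--Lindel\"of interpolation you describe. One small correction: the Ramachandra--Sankaranarayanan result you allude to (Lemma~3.5 here) is not a general maximum-modulus lemma but a specific bound for $\zeta(s)$ on certain horizontal segments, and it plays no role in this lemma; the interpolation step needs only the standard Phragm\'en--Lindel\"of principle.
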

\begin{proof}
See, Corollary $1.2$ of \cite{F}.
\end{proof}
\begin{lemma}
For $|t|\geq 10$, we have 
\begin{equation}
    \zeta(\sigma+it)\ll(|t|+10)^{2\kappa(1-\sigma)+\epsilon}\label{E8}
\end{equation}
uniformly for $\frac{1}{2}\leq \sigma \leq 1+\epsilon$ and  for some $\kappa\geq 0$.\label{L3.4}
\end{lemma}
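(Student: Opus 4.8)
The plan is to deduce \eqref{E8} from the Phragm\'en--Lindel\"of convexity principle (equivalently, the maximum modulus principle applied to a suitable auxiliary entire function), interpolating between a bound for $\zeta$ on the critical line and the essentially trivial bound just to the right of $\Re(s)=1$. Here $\kappa\ge 0$ should be read as \emph{any} admissible exponent in an estimate of the shape $\zeta(\tfrac12+it)\ll_\epsilon(|t|+10)^{\kappa+\epsilon}$ valid for $|t|\ge 10$: the classical convexity bound already furnishes $\kappa=\tfrac14$ unconditionally, while Weyl-type and more recent subconvex estimates give smaller values, and the lemma is phrased precisely so that whichever such $\kappa$ one feeds in is propagated uniformly across the strip $\tfrac12\le\sigma\le 1+\epsilon$.

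First I would record the two boundary inputs. On the line $\sigma=1+\epsilon$, absolute convergence of $\sum_n n^{-s}$ gives $\zeta(1+\epsilon+it)\ll_\epsilon 1$ uniformly in $t$; on the line $\sigma=\tfrac12$ one invokes the chosen (sub)convex bound $\zeta(\tfrac12+it)\ll_\epsilon(|t|+10)^{\kappa+\epsilon}$. To accommodate the pole of $\zeta$ at $s=1$ I would pass to the entire function $g(s):=(s-1)\zeta(s)$, which is of finite order in the strip $\tfrac12\le\sigma\le 1+\epsilon$ and, by the two boundary inputs, satisfies $g(\tfrac12+it)\ll_\epsilon(|t|+10)^{1+\kappa+\epsilon}$ and $g(1+\epsilon+it)\ll_\epsilon |t|+10$.

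Applying Phragm\'en--Lindel\"of for a vertical strip to $g$ then yields $g(\sigma+it)\ll_\epsilon(|t|+10)^{\ell(\sigma)+\epsilon}$, where $\ell$ is the linear function through the boundary exponents; letting the harmless $\epsilon$-shift of the right edge tend to $0$, $\ell$ is the line through $(\tfrac12,\,1+\kappa)$ and $(1,\,1)$, that is, $\ell(\sigma)=1+2\kappa(1-\sigma)$. Finally, for $|t|\ge 10$ one has $|s-1|\ge|t|\ge\tfrac12(|t|+10)$, so dividing by $s-1$ can only decrease the modulus: $\zeta(\sigma+it)=g(\sigma+it)/(s-1)\ll_\epsilon(|t|+10)^{1+2\kappa(1-\sigma)+\epsilon}/(|t|+10)=(|t|+10)^{2\kappa(1-\sigma)+\epsilon}$, uniformly for $\tfrac12\le\sigma\le 1+\epsilon$, which is exactly \eqref{E8}. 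The only step that requires any thought is the passage through the pole at $s=1$, and this is precisely what the factor $(s-1)$ together with the hypothesis $|t|\ge 10$ disposes of; the rest is the textbook convexity argument, so I do not anticipate a genuine obstacle.
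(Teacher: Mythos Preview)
Your proposal is correct and follows essentially the same approach as the paper: the paper's proof consists of the single sentence ``Follows from \cite{A} and Maximum modulus principle with $\kappa=\frac{13}{84}$,'' i.e., Bourgain's subconvexity bound on the critical line combined with Phragm\'en--Lindel\"of interpolation, which is exactly what you carry out in detail. The only difference is that you leave $\kappa$ generic (noting convexity gives $\kappa=\tfrac14$), whereas the paper immediately specializes to Bourgain's $\kappa=\tfrac{13}{84}$, the value actually used later in the proof of Theorem~2.2.
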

\begin{proof}
Follows from \cite{A} and Maximum modulus principle with $\kappa=\frac{13}{84}$.
\end{proof}

\begin{lemma}[KR+AS]
For $\frac{1}{2}\leq \sigma\leq 2$, T-sufficiently large, there exist a $T^*\in[T,T+T^\frac{1}{3}]$ such that the bound 
\begin{equation}
    \log\zeta(\sigma+iT)\ll (\log\log T^*)^2\ll(\log\log T)^2\label{E9}
\end{equation}
holds uniformly and we have 
\begin{equation}
    |\zeta(\sigma+iT)|\ll \exp((\log\log T)^2)\ll_\epsilon T^\epsilon\label{E10}
\end{equation}
on the horizontal line with $T=T^*$ and $\frac{1}{2}\leq \sigma\leq 2.$\label{L3.5}
\end{lemma}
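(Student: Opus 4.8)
The plan is to reduce the assertion to a uniform bound for $\log|\zeta(\sigma+iT^{*})|$ and for $\arg\zeta(\sigma+iT^{*})=\Im\log\zeta(\sigma+iT^{*})$ on $\tfrac12\le\sigma\le2$ (with $\log\zeta$ the branch vanishing as $\sigma\to+\infty$), and to produce $T^{*}$ by an averaging argument over $[T,T+T^{1/3}]$. The main analytic input is Selberg's approximation of $\log\zeta$: for $\tfrac12\le\sigma\le2$, $t$ not the ordinate of a zero, and any $x\ge2$,
$$\log\zeta(\sigma+it)=\sum_{p\le x}p^{-\sigma-it}+O(\log\log x)+E(\sigma+it;x),$$
where the $O(\log\log x)$ absorbs the prime powers and the error $E$ is controlled by the zeros $\rho=\beta+i\gamma$ with $|\gamma-t|\le1$ — each such zero contributing a quantity of order $\min(1,x^{\beta-\sigma})/|\sigma+it-\rho|$, with $\sigma$ replaced by an auxiliary abscissa $\sigma_{1}(t)=\tfrac12+2\max_{|\gamma-t|\le1}(\beta-\tfrac12)_{+}$ when this exceeds $\sigma$ — together with an unconditional term of size $\ll(\log t)/\log x$.

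The decisive choices are $k=\lfloor\log\log T\rfloor$ and $x=T^{1/(10k)}$, so that $\log x\asymp(\log T)/\log\log T$ and $\log\log x\sim\log\log T$. I would select $T^{*}\in[T,T+T^{1/3}]$ subject to two requirements: first, that no zero $\rho$ satisfies $|\gamma-T^{*}|<c/\log T$ (using $N(t+h)-N(t)\ll h\log t+\log t$, this fails only on a set of measure $\ll cT^{1/3}$, hence on less than $\tfrac14T^{1/3}$ if $c$ is small); and second, that $\bigl|\sum_{p\le x}p^{-\sigma_{j}-iT^{*}}\bigr|\le(\log\log T)^{2}$ for every point $\sigma_{j}$ of a net of mesh $(\log T)^{-2}$ in $[\tfrac12,2]$. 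For the second requirement the point is that, because $x$ is only a tiny power of $T$, the short–interval $2k$-th moment survives: expanding $\bigl|\sum_{p\le x}p^{-\sigma_{j}-it}\bigr|^{2k}$, the diagonal contributes $\ll T^{1/3}\,k!\,\bigl(\sum_{p\le x}p^{-1}\bigr)^{k}\ll T^{1/3}(Ck\log\log T)^{k}$ and the off–diagonal is $\ll x^{2k}k^{2k}=T^{1/5+o(1)}\ll T^{1/3}$, so that
$$\int_{T}^{T+T^{1/3}}\Bigl|\sum_{p\le x}p^{-\sigma_{j}-it}\Bigr|^{2k}\,dt\ll T^{1/3}(Ck\log\log T)^{k}.$$
By Chebyshev's inequality and a union bound over the $\ll(\log T)^{2}$ net points, the exceptional set for the second requirement has measure $\ll(\log T)^{2}T^{1/3}(Ck\log\log T)^{k}/(\log\log T)^{4k}<\tfrac14T^{1/3}$ for $T$ large, since $(\log\log T)^{4k}$ beats $(\log T)^{2}(Ck\log\log T)^{k}$ once $k\asymp\log\log T$ (the surviving factor is $[e^{2}C/(\log\log T)^{2}]^{\log\log T}\to0$). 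Hence such a $T^{*}$ exists.

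For this $T^{*}$ the horizontal line $\Im s=T^{*}$ is zero-free in $\tfrac12\le\sigma\le2$, so $\log\zeta$ is regular there, and since $\zeta'/\zeta(\sigma+iT^{*})=\sum_{|\gamma-T^{*}|\le1}(\sigma+iT^{*}-\rho)^{-1}+O(\log T)\ll(\log T)^{2}$ (there are $\ll\log T$ terms, each of modulus $\le c^{-1}\log T$), both $\log|\zeta|$ and $\arg\zeta$ are $(\log T)^{2}$-Lipschitz in $\sigma$; thus replacing $\sigma$ by its nearest net point $\sigma_{j}$ costs only $O(1)$. Furthermore the unconditional part of $E$ is $\ll(\log t)/\log x\ll\log\log T$, and — for zeros with $\beta\le\sigma$ — its zero part is $\ll(\log x)^{-1}\sum_{|\gamma-T^{*}|\le1}|T^{*}-\gamma|^{-1}\ll(\log x)^{-1}\log T\log\log T\ll(\log\log T)^{2}$, the inner sum being estimated from the near–equidistribution of zeros on the scale $1/\log T$ together with the separation in the first requirement. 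Combining these, for $\tfrac12\le\sigma\le2$,
$$\log|\zeta(\sigma+iT^{*})|\le\Re\!\Bigl(\sum_{p\le x}p^{-\sigma_{j}-iT^{*}}\Bigr)+O(\log\log x)+\Re E(\sigma_{j}+iT^{*})+O(1)\ll(\log\log T)^{2},$$
and applying the same argument to the imaginary part gives $\arg\zeta(\sigma+iT^{*})\ll(\log\log T)^{2}$. Therefore $\log\zeta(\sigma+iT^{*})\ll(\log\log T^{*})^{2}\ll(\log\log T)^{2}$, and consequently $|\zeta(\sigma+iT^{*})|\ll\exp\bigl((\log\log T)^{2}\bigr)\ll_{\epsilon}T^{\epsilon}$.

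I expect the main obstacle to be the control of the zero part of $E$ when $\sigma$ is close to $\tfrac12$ and there is a zero with $\beta$ appreciably larger than $\tfrac12$ near $T^{*}$: for such a zero the damping factor $x^{\beta-\sigma}$ is a positive power of $T$, not offset by $1/\log x$, and — consistently with the known zero–density bounds for an interval as short as $[T,T+T^{1/3}]$ — these zeros cannot be avoided by any useful distance. This is precisely the configuration handled by Selberg's moving abscissa $\sigma_{1}(t)$ and the attendant rearrangement of the zero contributions; pushing that analysis through in the short interval, so that the exceptional set retains measure $o(T^{1/3})$, is the delicate heart of the Ramachandra--Sankaranarayanan lemma. (A much softer device — choosing $T^{*}$ so that $\zeta'/\zeta(\sigma+iT^{*})\ll\log^{2}T$ and integrating in $\sigma$ — already furnishes a usable line, but only with the far weaker estimate $|\zeta(\sigma+iT^{*})|\ll\exp(\log^{2}T)$, which is why the prime–sum expansion with a growing moment is essential for the bound as stated.)
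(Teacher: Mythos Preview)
The paper does not prove this lemma at all; its entire proof is the single line ``See, Lemma~1 of~[G]''. Your proposal is therefore an attempt to reconstruct the Ramachandra--Sankaranarayanan argument independently, and the architecture you describe---Selberg's explicit approximation for $\log\zeta$, a $2k$-th moment estimate with $k\asymp\log\log T$ for a short Dirichlet polynomial over $[T,T+T^{1/3}]$, and a net argument in $\sigma$---is in the right spirit and close to what they actually do.

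That said, your proposal is not a complete proof, and you yourself flag the gap honestly. The zero contribution to Selberg's error term is controlled by your estimate only for zeros with $\beta\le\sigma$; when $\sigma$ is near $\tfrac12$ and a zero $\rho=\beta+i\gamma$ with $\beta>\tfrac12$ lies within unit height of $T^{*}$, the damping factor becomes $x^{\beta-\sigma}$, a genuine positive power of $T$, and your first requirement (separating $T^{*}$ from zero \emph{ordinates} by $c/\log T$) does nothing to exclude such zeros. On an interval as short as $[T,T+T^{1/3}]$ there is no unconditional zero-density input strong enough to let you simply avoid them. Your final paragraph correctly identifies this as ``the delicate heart'' of the lemma and names Selberg's moving abscissa $\sigma_{1}(t)$ as the relevant device, but then stops short of carrying the analysis through. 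As written, the argument establishes the bound only on $\sigma\ge\sigma_{1}(T^{*})$; extending it uniformly down to $\sigma=\tfrac12$ is precisely the content you have not supplied. So what you have is an informed and well-organized sketch with a correctly located, but unfilled, gap---which is more than the paper offers, since the paper simply quotes the result.
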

\begin{proof}
See, Lemma $1$ of \cite{G}.
\end{proof}

\begin{lemma}
For $\Re (s)>1$, define 
\begin{equation*}
L_f(s)=\sum_{n=1}^\infty \frac{\lambda_{f\otimes f\otimes f}(n)^2}{n^s}.\end{equation*}
Then we have 
\begin{equation}
    L_f(s)=\zeta(s)^5L({\rm{sym}}^2f,\ s)^8L({\rm{sym}}^4f,\ s)^4L({\rm{sym}}^4f\otimes {\rm{sym}}^2f,\ s)U(s),\label{E11}
\end{equation}
where the function $U(s)$ is a Derichlet series which converges absolutely for $\Re (s)>\frac{1}{2}$ and $U(s)\neq 0$ for $\Re (s)=1$.\label{L3.6}
\end{lemma}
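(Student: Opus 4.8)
\section*{Proof proposal}

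The plan is to reduce \eqref{E11} to an identity between the Euler factors at each prime $p$ and to verify that identity by a Satake-parameter computation. Since $\lambda_{f\otimes f\otimes f}$ is the coefficient sequence of an Euler product it is multiplicative, hence so is $n\mapsto\lambda_{f\otimes f\otimes f}(n)^2$; together with $|\lambda_{f\otimes f\otimes f}(n)|\ll_\epsilon n^\epsilon$ (Deligne) this gives $L_f(s)=\prod_p\mathcal L_p(s)$ with $\mathcal L_p(s)=\sum_{k\ge0}\lambda_{f\otimes f\otimes f}(p^k)^2 p^{-ks}$, absolutely convergent for $\Re s>1$. Put
\[
U(s):=L_f(s)\,\zeta(s)^{-5}\,L(\mathrm{sym}^2 f,s)^{-8}\,L(\mathrm{sym}^4 f,s)^{-4}\,L(\mathrm{sym}^4 f\otimes\mathrm{sym}^2 f,s)^{-1},
\]
so $U(s)=\prod_p U_p(s)$ with $U_p$ the corresponding quotient of Euler factors. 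It then suffices to show $U_p(s)=1+O(p^{-2s})$ with an absolute implied constant --- then $\sum_p|U_p(s)-1|$ converges locally uniformly for $\Re s>\tfrac12$, so $U(s)$ is holomorphic there --- and to verify $U(s)\ne0$ on $\Re s=1$.

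For the Euler-factor identity I would write $\alpha_p=e^{i\theta_p}$, $\beta_p=e^{-i\theta_p}$, $x_p=\lambda_f(p)=2\cos\theta_p$, so that $\lambda_{\mathrm{sym}^j f}(p)=\sum_{m=0}^j\alpha_p^{j-m}\beta_p^m$ is a polynomial in $x_p$: explicitly $\lambda_{\mathrm{sym}^0 f}(p)=1$, $\lambda_{\mathrm{sym}^2 f}(p)=x_p^2-1$, $\lambda_{\mathrm{sym}^4 f}(p)=x_p^4-3x_p^2+1$, $\lambda_{\mathrm{sym}^6 f}(p)=x_p^6-5x_p^4+6x_p^2-1$. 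Expanding the Euler factor of $L(f\otimes f\otimes f,s)$ gives $\lambda_{f\otimes f\otimes f}(p)=\alpha_p^3+\beta_p^3+3(\alpha_p+\beta_p)=(x_p^3-3x_p)+3x_p=x_p^3$, hence $\lambda_{f\otimes f\otimes f}(p)^2=x_p^6$. Decomposing $x_p^6$ via the Clebsch--Gordan calculus for $SU(2)$ (iterating $\mathrm{sym}^1\otimes\mathrm{sym}^j\cong\mathrm{sym}^{j+1}\oplus\mathrm{sym}^{j-1}$) gives
\[
\lambda_{f\otimes f\otimes f}(p)^2=\lambda_{\mathrm{sym}^6 f}(p)+5\,\lambda_{\mathrm{sym}^4 f}(p)+9\,\lambda_{\mathrm{sym}^2 f}(p)+5 .
\]
On the other hand the $p^{-s}$-coefficient of $\zeta(s)^5 L(\mathrm{sym}^2 f,s)^8 L(\mathrm{sym}^4 f,s)^4 L(\mathrm{sym}^4 f\otimes\mathrm{sym}^2 f,s)$ is $5+8\lambda_{\mathrm{sym}^2 f}(p)+4\lambda_{\mathrm{sym}^4 f}(p)+\lambda_{\mathrm{sym}^4 f\otimes\mathrm{sym}^2 f}(p)$, and from \eqref{E3} one has $\lambda_{\mathrm{sym}^4 f\otimes\mathrm{sym}^2 f}(p)=\lambda_{\mathrm{sym}^4 f}(p)\,\lambda_{\mathrm{sym}^2 f}(p)=\lambda_{\mathrm{sym}^6 f}(p)+\lambda_{\mathrm{sym}^4 f}(p)+\lambda_{\mathrm{sym}^2 f}(p)$ (the rule $\mathrm{sym}^4\otimes\mathrm{sym}^2\cong\mathrm{sym}^6\oplus\mathrm{sym}^4\oplus\mathrm{sym}^2$). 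These two $p^{-s}$-coefficients coincide. Since $L_f$, $\zeta$ and all symmetric-power and Rankin--Selberg $L$-functions involved are ordinary Dirichlet series supported on the integers, $U_p(s)$ is again such a series, and the coincidence of the linear terms forces its $p^{-s}$-coefficient to vanish, i.e. $U_p(s)=1+O(p^{-2s})$. The implied constant is absolute because the coefficients of $U_p$ are fixed polynomials in $x_p\in[-2,2]$ while the degrees of the reciprocals of the Euler factors used are bounded independently of $p$. Hence $U(s)$ converges absolutely for $\Re s>\tfrac12$.

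For $U(s)\ne0$ on $\Re s=1$: because the Satake parameters occur in complex-conjugate pairs, each of $\mathcal L_p(\sigma)$, $\zeta_p(\sigma)^{-5}$, $L_p(\mathrm{sym}^2 f,\sigma)^{-8}$, $L_p(\mathrm{sym}^4 f,\sigma)^{-4}$, $L_p(\mathrm{sym}^4 f\otimes\mathrm{sym}^2 f,\sigma)^{-1}$ is strictly positive for real $\sigma>0$, whence $U_p(\sigma)>0$ and so $U(\sigma)=\prod_p U_p(\sigma)>0$ for $\sigma>\tfrac12$; in particular $U(1)\ne0$. For a point $1+it_0$ with $t_0\ne0$ one uses the non-vanishing on $\Re s=1$ of $\zeta$ and of $L(\mathrm{sym}^2 f,\cdot)$, $L(\mathrm{sym}^4 f,\cdot)$ (Gelbart--Jacquet, Kim) and $L(\mathrm{sym}^4 f\otimes\mathrm{sym}^2 f,\cdot)$, together with that of $L_f(s)$ itself: $f\otimes f\otimes f$ is the isobaric sum $\mathrm{sym}^3 f\boxplus f\boxplus f$ (Kim--Shahidi), so $L_f(s)$ equals the Rankin--Selberg square $L\bigl((\mathrm{sym}^3 f\boxplus f\boxplus f)\times(\mathrm{sym}^3 f\boxplus f\boxplus f),s\bigr)$ up to an Euler product that, by the same estimate as for $U$, is absolutely convergent and non-vanishing on $\Re s=1$, and Rankin--Selberg $L$-functions do not vanish on the line $\Re s=1$.

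The real content, I expect, is the bookkeeping of the third paragraph: verifying the expansion $\lambda_f(p)^6=\lambda_{\mathrm{sym}^6 f}(p)+5\lambda_{\mathrm{sym}^4 f}(p)+9\lambda_{\mathrm{sym}^2 f}(p)+5$ (which is exactly what pins down the exponents $5,8,4,1$ in \eqref{E11}) and confirming that matching only the $p^{-s}$-coefficients suffices --- it does because every function in sight is a genuine Dirichlet series, so cancelling the linear term automatically throws the remainder into $\Re s>\tfrac12$. The rest is standard: convergence bookkeeping via Deligne's bound $|x_p|\le2$, and the analytic continuation and $\Re s=1$ non-vanishing of the symmetric-power and Rankin--Selberg $L$-functions.
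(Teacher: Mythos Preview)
Your proof is correct and follows essentially the same approach as the paper: both arguments reduce to matching the $p^{-s}$-coefficients of the two sides via Satake parameters, and then invoke the standard fact that once the linear terms agree the quotient $U(s)$ is an Euler product with local factors $1+O(p^{-2s})$, hence absolutely convergent for $\Re s>\tfrac12$. The only cosmetic difference is the bookkeeping: the paper starts from the identity $\lambda_{f\otimes f\otimes f}(p)=\lambda_{\mathrm{sym}^3 f}(p)+2\lambda_f(p)$ (quoted from \cite{C}) and expands the square term by term, whereas you use the equivalent observation $\lambda_{f\otimes f\otimes f}(p)=\lambda_f(p)^3$ and decompose $\lambda_f(p)^6$ by the Clebsch--Gordan rule; both routes land on $5+8\lambda_{\mathrm{sym}^2 f}(p)+4\lambda_{\mathrm{sym}^4 f}(p)+\lambda_{\mathrm{sym}^4 f\otimes\mathrm{sym}^2 f}(p)$, and the paper simply writes ``the lemma follows by standard arguments'' where you spell out the nonvanishing of $U$ on $\Re s=1$.
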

\begin{proof}
Since we have $\lambda_{f\otimes f\otimes f}(n)^2$ is a multiplicative function and the trivial upper bound $O(n^\epsilon)$, we have that, for $\Re (s)>1$,
\begin{equation*}
L_f(s)=\prod_p\bigg(1+\frac{\lambda_{f\otimes f\otimes f}(p)^2}{p^s}+\frac{\lambda_{f\otimes f\otimes f}(p^2)^2}{p^{2s}}+\ldots\bigg).
\end{equation*}
In the half-plane $\Re (s)>1$, the corresponding coefficients of the term $p^{-s}$ determine the analytic properties of $L_f(s)$. By Lemma 2.1 of \cite{C} we easily find the identity
\begin{align}
    \lambda_{f\otimes f\otimes f}(p)^2 & =(\lambda_{{\rm{sym}}^3f}(p)+2\lambda_f(p))^2\nonumber\\
           & =\lambda_{{\rm{sym}}^3f}(p)^2+4\lambda_{{\rm{sym}}^3f}(p)\lambda_f(p)+4\lambda_f(p)^2.\label{E12}
\end{align}
Consider $\lambda_{{\rm{sym}}^2f\otimes {\rm{sym}}^4f}(p)$, the coefficient of $p^{-s}$ in the Euler product of $L({\rm{sym}}^2f\otimes {\rm{sym}}^4f,\ s)$,
\begin{equation*}
    \lambda_{{\rm{sym}}^2f\otimes {\rm{sym}}^4f}(p)=3+3\alpha_f(p)^2+2\alpha_f(p)^4+\alpha_f(p)^6+3\beta_f(p)^2+2\beta_f(p)^4+\beta_f(p)^6.
\end{equation*}

Now, consider $\lambda_{{\rm{sym}}^3 f}(p)$, the coefficient of $p^{-s}$ in the Euler product of the $L-$function $L({\rm{sym}}^3f,\ s)$,
\begin{equation*}
    \lambda_{{\rm{sym}}^3 f}(p)=\alpha_f(p)^3+\alpha_f(p)+\beta_f(p)^3+\beta_f(p).
\end{equation*}
We have
\begin{align}
    \lambda_{{\rm{sym}}^3 f}(p)^2&=\big(\alpha_f(p)^3+\alpha_f(p)+\beta_f(p)^3+\beta_f(p)\big)^2\nonumber\\
    &=2\big(\alpha_f(p)^2+\beta_f(p)^2+\alpha_f(p)^4+\beta_f(p)^4+2\big)\nonumber
    \\ &\qquad+\alpha_f(p)^2+\beta_f(p)^2+\alpha_f(p)^6+\beta_f(p)^6\nonumber\\
    &=1+\lambda_{{\rm{sym}}^2f\otimes {\rm{sym}}^4f}(p).\label{E13}
\end{align}
Now, consider
\begin{align*}
 \lambda_{{\rm{sym}}^3f}(p)\lambda_f(p)&=\big(\alpha_f(p)^3+\beta_f(p)^3+\alpha_f(p)+\beta_f(p)\big)\big(\alpha_f(p)+\beta_f(p)\big)\\
    &=\alpha_f(p)^4+\beta_f(p)^4+2\alpha_f(p)^2+2\beta_f(p)^2+2.
\end{align*}
By the coefficients of $p^{-s}$ in the Euler products of $L({\rm{sym}}^2f,\ s)$ and $L({\rm{sym}}^4f,\ s)$, we have 
\begin{align}
    \lambda_{{\rm{sym}}^2f}(p)+\lambda_{{\rm{sym}}^4f}(p)&=\alpha_f(p)^4+\beta_f(p)^4+2\alpha_f(p)^2+2\beta_f(p)^2+2\nonumber\\
    &=\lambda_{{\rm{sym}}^3f}(p)\lambda_f(p).\label{E14}
\end{align}

Consider $\lambda_{{\rm{sym}}^2f}(p)$, the coefficient of $p^{-s}$ in the Euler product of $L({\rm{sym}}^2f,\ s)$,
\begin{equation*}
    \lambda_{{\rm{sym}}^2f}(p)=\alpha_f(p)^2+\beta_f(p)^2+1.
\end{equation*}

Now, consider
\begin{align}
    \lambda_f(p)^2&
    =\big(\alpha_f(p)+\beta_f(p)\big)^2\nonumber\\
    &=\alpha_f(p)^2+\beta_f(p)^2+2\nonumber\\
    &=1+\lambda_{{\rm{sym}}^2f}(p).\label{E15}
\end{align}

By using \eqref{E13}, \eqref{E14} and \eqref{E15}, we have
\begin{align*}
    \lambda_{f\otimes f\otimes f}(p)^2&=\big(1+\lambda_{{\rm{sym}}^2f\otimes {\rm{sym}}^4f}(p)\big)+4\big(\lambda_{{\rm{sym}}^2f}(p)+\lambda_{{\rm{sym}}^4}(p)\big)\\
    &\qquad +4\big(1+\lambda_{{\rm{sym}}^2f}(p)\big)\\
    &=5+8\lambda_{{\rm{sym}}^2f}(p)+4\lambda_{{\rm{sym}}^4f}(p)+\lambda_{{\rm{sym}}^2f\otimes {\rm{sym}}^4f}(p).
\end{align*}
Now the lemma follows by standard arguments.
\end{proof}

\begin{lemma}
For $\Re (s)>1$, define \[D_f(s)=\sum_{n=1}^\infty \frac{\lambda_{{\rm{sym}}^2f\otimes f}(n)^2}{n^s}.\]
Then we have \begin{equation}
    D_f(s)=\zeta(s)^2L({\rm{sym}}^2f,\ s)^3L({\rm{sym}}^4f,\ s)^2L({\rm{sym}}^4f\otimes {\rm{sym}}^2f,\ s)V(s),\label{E16}
\end{equation}
where the function $V(s)$ is a Derichlet series which converges absolutely for $\Re (s)>\frac{1}{2}$ and $V(s)\neq 0$ for $\Re (s)=1$.\label{L3.7}
\end{lemma}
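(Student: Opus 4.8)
The plan is to follow the proof of Lemma \ref{L3.6} verbatim in structure, the only new ingredient being the determination of the exponents of the symmetric power and Rankin--Selberg factors. Since $\lambda_{{\rm{sym}}^2f\otimes f}(n)^2$ is multiplicative and, by Deligne, satisfies $\lambda_{{\rm{sym}}^2f\otimes f}(n)^2\ll_\epsilon n^\epsilon$, for $\Re(s)>1$ one can write
\[ D_f(s)=\prod_p\Big(1+\frac{\lambda_{{\rm{sym}}^2f\otimes f}(p)^2}{p^s}+\frac{\lambda_{{\rm{sym}}^2f\otimes f}(p^2)^2}{p^{2s}}+\cdots\Big), \]
and, exactly as in Lemma \ref{L3.6}, the analytic behaviour in the half-plane $\Re(s)>\tfrac12$ is dictated by the coefficient of $p^{-s}$, the contribution of the terms $p^{-2s},p^{-3s},\dots$ being harmless. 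So the task reduces to expressing $\lambda_{{\rm{sym}}^2f\otimes f}(p)^2$ as a linear combination of $1$, $\lambda_{{\rm{sym}}^2f}(p)$, $\lambda_{{\rm{sym}}^4f}(p)$ and $\lambda_{{\rm{sym}}^2f\otimes{\rm{sym}}^4f}(p)$.

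First I would compute $\lambda_{{\rm{sym}}^2f\otimes f}(p)$ directly from the Satake parameters. As ${\rm{sym}}^2f$ has local parameters $\{\alpha_f(p)^2,1,\beta_f(p)^2\}$ (using $\alpha_f(p)\beta_f(p)=1$) and $f$ has $\{\alpha_f(p),\beta_f(p)\}$, the tensor product has parameters $\{\alpha_f(p)^3,\alpha_f(p),\alpha_f(p),\beta_f(p),\beta_f(p),\beta_f(p)^3\}$, so that
\[ \lambda_{{\rm{sym}}^2f\otimes f}(p)=\alpha_f(p)^3+\beta_f(p)^3+2\big(\alpha_f(p)+\beta_f(p)\big)=\lambda_{{\rm{sym}}^3f}(p)+\lambda_f(p), \]
which is just the decomposition ${\rm{sym}}^2f\otimes f={\rm{sym}}^3f\oplus f$ at the level of local factors. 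Squaring and substituting the three identities \eqref{E13}, \eqref{E14}, \eqref{E15} already established in the proof of Lemma \ref{L3.6}, namely $\lambda_{{\rm{sym}}^3f}(p)^2=1+\lambda_{{\rm{sym}}^2f\otimes{\rm{sym}}^4f}(p)$, $\lambda_{{\rm{sym}}^3f}(p)\lambda_f(p)=\lambda_{{\rm{sym}}^2f}(p)+\lambda_{{\rm{sym}}^4f}(p)$ and $\lambda_f(p)^2=1+\lambda_{{\rm{sym}}^2f}(p)$, gives
\[ \lambda_{{\rm{sym}}^2f\otimes f}(p)^2=2+3\,\lambda_{{\rm{sym}}^2f}(p)+2\,\lambda_{{\rm{sym}}^4f}(p)+\lambda_{{\rm{sym}}^2f\otimes{\rm{sym}}^4f}(p), \]
which produces precisely the exponents $2,3,2,1$ appearing in \eqref{E16}.

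It then remains to set
\[ V(s):=D_f(s)\,\zeta(s)^{-2}L({\rm{sym}}^2f,\ s)^{-3}L({\rm{sym}}^4f,\ s)^{-2}L({\rm{sym}}^4f\otimes{\rm{sym}}^2f,\ s)^{-1} \]
and verify the two claimed properties. By the last displayed identity the coefficient of $p^{-s}$ in the $p$-th local factor of $V(s)$ vanishes; combining this with Deligne's bound for $\lambda_{{\rm{sym}}^2f\otimes f}(p^k)$ and the analogous trivial bounds for the Dirichlet coefficients of the four $L$-functions in the denominator shows that the $p$-th local factor of $V$ equals $1+O(p^{-2\sigma+\epsilon})$ uniformly in $p$, whence the Euler product for $V(s)$ converges absolutely in $\Re(s)>\tfrac12$ and defines a holomorphic function there. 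Finally $V(s)\neq 0$ on $\Re(s)=1$: for $\Re(s)\ge 1$ each local factor is $1+O(p^{-2})$, hence bounded away from $0$ for all but finitely many $p$, while the finitely many remaining factors are non-zero Euler factors, so an absolutely convergent product of such factors cannot vanish --- this is the same argument that gives $U(s)\neq 0$ on $\Re(s)=1$ in Lemma \ref{L3.6}.

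The step I expect to be the real obstacle --- the one hidden behind ``standard arguments'' in Lemma \ref{L3.6} --- is the uniform control of the tail $\sum_{k\ge 2}(\text{coefficient of }p^{-ks}\text{ in }V_p)\,p^{-k\sigma}$: one must check that the higher prime-power coefficients of $D_f(s)$ and of the product $\zeta(s)^2L({\rm{sym}}^2f,\ s)^3L({\rm{sym}}^4f,\ s)^2L({\rm{sym}}^4f\otimes{\rm{sym}}^2f,\ s)$ agree closely enough that their difference is genuinely $O(p^{-2\sigma+\epsilon})$ with an absolute implied constant, so that $V(s)$ does extend holomorphically from $\sigma>1$ all the way down to $\sigma>\tfrac12$. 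Everything else is the symmetric-power Hecke-relation bookkeeping carried out above.
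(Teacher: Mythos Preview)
Your proposal is correct and follows essentially the same approach as the paper: the paper's proof simply quotes the identity $\lambda_{{\rm{sym}}^2f\otimes f}(p)=\lambda_{{\rm{sym}}^3f}(p)+\lambda_f(p)$ from \cite{C} and then says ``the lemma follows in a similar manner as the proof of Lemma~\ref{L3.6}'', which is exactly the computation you carry out explicitly (squaring and inserting \eqref{E13}--\eqref{E15} to obtain the exponents $2,3,2,1$). The only difference is that you derive the identity from the Satake parameters yourself rather than citing \cite{C}, and you spell out the ``standard arguments'' for $V(s)$ that the paper leaves implicit.
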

\begin{proof}
By $(6.2)$ of \cite{C}, we have 
\begin{equation*}
    \lambda_{{\rm{sym}}^2f\otimes f}(p)=\lambda_{{\rm{sym^3}}f}(p)+\lambda_f(p)
\end{equation*}
for $\Re (s)>1$.

Now the lemma follows in a similar manner as the  proof of Lemma 3.6.
\end{proof}

\begin{proof}[{\bf Proof of Theorem 2.1}]
Firstly, recall that 
\[L_f(s)=\sum_{n=1}^\infty \frac{\lambda_{f\otimes f\otimes f}(n)^2}{n^s}\]
for $\Re (s)>1$ and by \eqref{E11}, we have
\begin{equation*}
    L_f(s)=\zeta(s)^5L({\rm{sym}}^2f,\ s)^8L({\rm{sym}}^4f,\ s)^4L({\rm{sym}}^4f\otimes {\rm{sym}}^2f,\ s)U(s),
\end{equation*}
where the function $U(s)$ is a Derichlet series which converges absolutely for $\Re (s)>\frac{1}{2}$ and $U(s)\neq 0$ for $\Re (s)=1$.\\
By applying Perron formula to $L_f(s)$, we have
\[\sum_{n\leq x}\lambda_{f\otimes f\otimes f}(n)^2=\frac{1}{2\pi i}\int_{b-iT}^{b+iT}L_f(s)\frac{x^s}{s} \ ds+O\bigg(\frac{x^{1+\epsilon}}{T}\bigg)\]
where $b=1+\epsilon$ and $1\leq T\leq x$ is a parameter to be chosen later.\\ 
Now, we make the special choice $T=T^*$ of Lemma 3.5 which satisfies \eqref{E10} and shifting the line of integration to $\Re (s)=\frac{5}{7}$, we have by Cauchy residue theorem
\begin{align}
    \sum_{n\leq x} \lambda_{f\otimes f\otimes f}(n)^2  & =\frac{1}{2\pi i}\bigg\{\int_{\frac{5}{7}-iT}^{\frac{5}{7}-iT}+\int_{\frac{5}{7}+iT}^{b+iT}+\int_{b-iT}^{\frac{5}{7}-iT}\bigg\}L_f(s)\frac{x^s}{s} \ ds\nonumber\\ &\qquad
    +xP(\log x)+O\bigg(\frac{x^{1+\epsilon}}{T}\bigg)\nonumber \\
    & =J_1+J_2+J_3+xP(\log x)+O\bigg(\frac{x^{1+\epsilon}}{T}\bigg)\label{E17}
\end{align}
where $P(t)$ is a polynomial of degree $4$ and the main term $xP(\log x)$ is coming from the residue of $L_f(s)\frac{x^s}{s} $ at the pole $s=1$ of order $5$.\\
For $J_1$, we have 
\begin{align}
    J_1&\ll x^{\frac{5}{7}+\epsilon}\int_1^T\big|\zeta(\frac{5}{7}+it)^5L(\text{{\rm{sym}}}^2f,\ \frac{5}{7}+it)^8L(\text{{\rm{sym}}}^4f,\ \frac{5}{7}+it)^4\nonumber\\
    &\qquad\qquad \qquad L(\text{{\rm{sym}}}^4f\otimes \text{{\rm{sym}}}^2 f,\ \frac{5}{7}+it)\big|t^{-1}  \ dt+x^{\frac{5}{7}+\epsilon}\nonumber\\
&\ll  x^{\frac{5}{7}+\epsilon}\sup_{1\leq T_1\leq T}I_1(T_1)^{5/12}I_2(T_1)^{1/2}I_3(T_1)^{1/12}T_1^{-1},\label{E18}
\end{align}
where
\begin{equation*}
   I_1(T_1)=\int_{T_1}^{2T_1}\big|\zeta\big(\frac{5}{7}+it\big)\big|^{12}  \ dt,
   \end{equation*}
   \begin{equation*}
    I_2(T_1)=\int_{T_1}^{2T_1}\big|L\big(\text{{\rm{sym}}}^2f,\ \frac{5}{7}+it\big)^8L\big(\text{{\rm{sym}}}^4f,\ \frac{5}{7}+it\big)^4\big|^2\ dt
\end{equation*}
 and
 \begin{equation*}
     I_3(T_1)=\int_{T_1}^{2T_1}\big|L\big(\text{{\rm{sym}}}^4f\otimes\text{{\rm{sym}}}^2f,\ \frac{5}{7}+it\big)\big|^{12}\ dt.
 \end{equation*}
 Then by Lemmas 3.1, 3.2 and 3.3, we have
 \begin{equation*}
     I_1(T_1)\ll T_1^{1+\epsilon},\qquad I_3(T_1)\ll 
     T_1^{180/7+\epsilon}
 \end{equation*}
and
\begin{align*}
    I_2(T_1)&\ll T_1^{16\times\frac{11}{8}\times\frac{2}{7}+\epsilon}\int_{T_1}^{2T_1}\big|L\big(\text{{\rm{sym}}}^4f,\ \frac{5}{7}+it\big)^4\big|^2\ dt\\
    &\ll T_1^{12+\epsilon}.
\end{align*}
Hence, we have
\begin{align}
J_1&\ll x^{5/7+\epsilon}\sup_{1\leq T_1\leq T}I_1(T_1)^{5/12}I_2(T_1)^{1/2}I_3(T_1)^{1/12}T_1^{-1}\nonumber\\
    &\ll x^{(5/7)+\epsilon} T^{(635/84)+\epsilon}.\label{E19}
\end{align}
For the integrals over horizontal segments, by using \eqref{E5}, \eqref{E7} and \eqref{E10}, we have 
\begin{align}
    J_2+J_3 & \ll\max_{\frac{5}{7}\leq \sigma\leq b}x^\sigma T^{10\epsilon+\{8\times\frac{11}{8}+\frac{35}{2}\}(1-\sigma)-1}\nonumber \\
    & =T^{10\epsilon}\max_{\frac{5}{7}\leq \sigma\leq b}\bigg(\frac{x}{T^\frac{57}{2}}\bigg)^\sigma T^\frac{55}{2}+\epsilon \nonumber \\
    &\ll x^{\frac{5}{7}+\epsilon}T^{\frac{50}{7}+10\epsilon}+\frac{x^{1+15\epsilon}}{T}.\label{E20}
\end{align}
From \eqref{E17}, \eqref{E19} and \eqref{E20}, we have 
\begin{equation}
    \sum_{n\leq x}\lambda_{f\otimes f\otimes f}(n)^2=xP(\log x)+O\bigg(\frac{x^{1+15\epsilon}}{T}\bigg)+o\bigg(x^{(5/7)+\epsilon}T^{(635/84)+\epsilon}\bigg).\label{E21}
\end{equation}
By taking $T=x^\frac{24}{719}$ in \eqref{E21}, we have 
\begin{equation*}
    \sum_{n\leq x} \lambda_{f\otimes f\otimes f}(n)^2=xP(\log x)+O\big(x^{(695/719)+\epsilon}\big)
\end{equation*}
This completes the proof of Theorem 2.1.
\end{proof}

\begin{proof}
[{\bf Proof of Theorem 2.2}]
Recall that \[D_f(s)=\sum_{n=1}^\infty \frac{\lambda_{{\rm{sym}}^2f\otimes f}(n)^2}{n^s}\]
for $\Re (s)>1$ and by Lemma 3.7, we have 
\begin{equation*}
        D_f(s)=\zeta(s)^2L({\rm{sym}}^2f,\ s)^3L({\rm{sym}}^4f,\ s)^2L({\rm{sym}}^4f\otimes {\rm{sym}}^2f,\ s)V(s),
\end{equation*}
where the function $V(s)$ is a Derichlet series which converges absolutely for $\Re (s)>\frac{1}{2}$ and $V(s)\neq 0$ for $\Re (s)=1$.\\
Now, by applying Perron formula to $D_f(s)$, we have 
\begin{equation*}
    \sum_{n\leq x}\lambda_{{\rm{sym}}^2f\otimes f}(n)^2=\frac{1}{2\pi i}\int_{b-iT}^{b+iT}D_f(s)\frac{x^s}{s} \ ds+O\bigg(\frac{x^{1+\epsilon}}{T
    }\bigg)
\end{equation*}
where $b=1+\epsilon$ and $1\leq T\leq x$ is a parameter to be chosen later.\\
Now, we make the special case $T=T^{**}$ of Lemma 3.5 which satisfies \eqref{E10} and shifting the line of integration to $\Re (s)=\frac{1}{2}+\epsilon$, we have by Cauchy residue theorem 
\begin{align}
    \sum_{n\leq x}\lambda_{{\rm{sym}}^2f\otimes f}(n)^2 &=\frac{1}{2\pi i}\bigg\{\int_{\frac{1}{2}+\epsilon-iT}^{\frac{1}{2}+\epsilon+iT}+\int_{b-iT}^{\frac{1}{2}+\epsilon-iT}+\int_{\frac{1}{2}+\epsilon+iT}^{b+iT} \bigg\}D_f(s)\frac{x^s}{s} \ ds\nonumber\\
    &\qquad+xQ(\log x)+O\bigg(\frac{x^{1+\epsilon}}{T}\bigg)\nonumber\\
    &=J_1+J_2+J_3+xQ(\log x)+O\bigg(\frac{x^{1+\epsilon}}{T}\bigg).\label{E22}
\end{align}
Where $Q(t)$ is a polynomial of degree $1$ and the main term $xQ(\log x)$ is coming from the residue of $D_f(s)\frac{x^s}{s}$ at the pole $s=1 $ of order $2$.\\
For $J_1$, we have 
\begin{multline}
J_1\ll x^{\frac{1}{2}+\epsilon}\int_1^T\big|\zeta(\frac{1}{2}+\epsilon+it)^2L({\rm{sym}}^2f,\ \frac{1}{2}+\epsilon+it)^3L({\rm{sym}}^4f,\ \frac{1}{2}+\epsilon+it)^2\\L({\rm{sym}}^4f\otimes {\rm{sym}}^2f,\ \frac{1}{2}+\epsilon+it)V(\frac{1}{2}+\epsilon+it)\big|t^{-1}\ dt+x^{\frac{1}{2}+\epsilon}.\label{E23}
\end{multline}
By Lemma 3.4 and Cauchy-Schwarz inequality, we have
\begin{multline}
J_1\ll x^{\frac{1}{2}+\epsilon}\sup_{1\leq T_1\leq T}T_1^{2\times2\kappa\times\frac{1}{2}+\frac{33}{16}}\bigg(\int_{T_1}^{2T_1}\big|L({\rm{sym}}^4f,\ \frac{1}{2}+it)^2\big|^2  \ dt\bigg)^\frac{1}{2}\\
\bigg(\int_{T_1}^{2T_1}\big|L({\rm{sym}}^4f\otimes {\rm{sym}}^2f,\ \frac{1}{2}+it)\big|^2  \ dt\bigg)^\frac{1}{2}T_1^{-1}+x^{\frac{1}{2}+\epsilon}.\nonumber
\end{multline}
By \eqref{E4} of Lemma 3.1, we have
\begin{align}
    J_1 & \ll x^{\frac{1}{2}+\epsilon}+x^{\frac{1}{2}+\epsilon}\sup_{1\leq T_1\leq T}T_1^{2\kappa+\frac{33}{16}+\frac{10}{2}\times\frac{1}{2}+\frac{15}{2}\times\frac{1}{2}-1+\epsilon}\nonumber\\  &\ll x^{(1/2)+\epsilon}T^{2\kappa+(117/16)+\epsilon}.\label{E24}
\end{align}
For the integrals over the horizontal segments, by using \eqref{E5}, \eqref{E7} and \eqref{E10}, we have 
\begin{align}
    J_2+J_3&\ll \max_{\frac{1}{2}+\epsilon\leq\sigma\leq b}x^\sigma T^{\{3\times\frac{11}{8}+\frac{25}{2}\}(1-\sigma)-1+2\epsilon}\nonumber\\
    &=T^{2\epsilon}\max_{\frac{1}{2}+\epsilon\leq\sigma\leq b}\bigg(\frac{x}{T^\frac{133}{8}}\bigg)^\sigma T^\frac{125}{8}\nonumber\\
    &\ll x^{(1/2)+\epsilon}T^{(117/16)+2\epsilon}+\frac{x^{1+10\epsilon}}{T}.\label{E25}
\end{align}
From \eqref{E22}, \eqref{E24} and \eqref{E25}, we have
\begin{equation}
\sum_{n\leq x}\lambda_{{\rm{sym}}^2f\otimes f}(n)^2\ll xQ(\log x)+x^{(1/2)+\epsilon}T^{2\kappa+(117/16)+\epsilon}+O\bigg(\frac{x^{1+\epsilon}}{T}\bigg).\label{E26}
\end{equation}
By taking $T=x^{\frac{8}{133+32\kappa}}$ in \eqref{E26}, we have
\begin{equation}
    \sum_{n\leq x} \lambda_{{\rm{sym}}^2f\otimes f}(n)^2=xQ(\log x)+O(x^{\frac{125+32\kappa}{133+32\kappa}+\epsilon})\label{E27}.
\end{equation}
Now Theorem {\bf 2.2} follows by taking $\kappa=\frac{13}{84}$ and we obtain
\begin{equation*}
\sum_{n\leq x} \lambda_{{\rm{sym}}^2f\otimes f}(n)^2=xQ(\log x)+O(x^{(2729/2897)+\epsilon}).
\end{equation*}
\end{proof}

\section{Some conditional results}
\textbf{\underline{Lindel{\"o}f Hypothesis for $\zeta(s):$}}\\
 This states that for $|t|\geq 10$
\begin{equation}
    \zeta(\sigma+it)\ll(|t|+10)^\epsilon\label{E28}
\end{equation}
for all $\epsilon>0$ uniformly for $\frac{1}{2}\leq\sigma\leq 2$.
\lbrack Refer \cite{H}, pp. 328-335\rbrack.
\begin{theorem}
Assuming Lindel{\"o}f Hypothesis for $\zeta(s)$. For any $\epsilon>0$ and $f\in H_k^*$, we have 
\[\sum_{n\leq x}\lambda_{f\otimes f\otimes f}(n)^2=xP(\log x)+O_{f,\ \epsilon}(x^{(55/57)+\epsilon})\] where $P(t)$ is a polynomial of degree $4$.\label{T4.1}
\end{theorem}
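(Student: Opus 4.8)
The plan is to repeat the proof of Theorem 2.1 almost word for word, the only difference being that \eqref{E28} now allows the line of integration to be pushed all the way to $\Re(s)=\tfrac12+\epsilon$, where the factor $\zeta(s)^5$ in \eqref{E11} becomes harmless. Applying Perron's formula to $L_f(s)$ with $b=1+\epsilon$ and $1\le T\le x$, I shift the contour from $\Re(s)=b$ to $\Re(s)=\tfrac12+\epsilon$. Exactly as in the proofs of Lemma 3.6 and Theorem 2.1, the only singularity in the strip $\tfrac12+\epsilon\le\Re(s)\le b$ is the pole of order $5$ of $\zeta(s)^5$ at $s=1$ (the symmetric power and Rankin--Selberg factors are entire there and $U(s)$ is holomorphic for $\Re(s)>\tfrac12$), so its residue contributes the main term $xP(\log x)$ with $\deg P=4$; the Perron error is $O(x^{1+\epsilon}/T)$, and there remain the vertical integral $J_1$ on $\Re(s)=\tfrac12+\epsilon$ and the two horizontal pieces $J_2,J_3$. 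Under \eqref{E28} the special ordinate of Lemma 3.5 is not needed here, since $\zeta\ll T^\epsilon$ holds uniformly in the strip.

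For $J_1$, write $L_2$, $L_4$, $L_{42}$ for $L(\mathrm{sym}^2 f,\tfrac12+\epsilon+it)$, $L(\mathrm{sym}^4 f,\tfrac12+\epsilon+it)$ and $L(\mathrm{sym}^4 f\otimes\mathrm{sym}^2 f,\tfrac12+\epsilon+it)$, of degrees $3$, $5$ and $15$; on this line $U(\tfrac12+\epsilon+it)\ll_\epsilon 1$ and, by \eqref{E28}, $|\zeta(\tfrac12+\epsilon+it)|^5\ll T^\epsilon$. After a dyadic splitting against the weight $t^{-1}$, it suffices to bound $\int_{T_1}^{2T_1}|L_2|^8|L_4|^4|L_{42}|\,dt$ for $1\le T_1\le T$. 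Keeping one factor $|L_4|^2$ inside the integral and estimating the rest pointwise, Lemma 3.3 gives $|L_2|^8\ll T_1^{11/2+\epsilon}$, the convexity bound \eqref{E5} gives $|L_4|^2\ll T_1^{5/2+\epsilon}$ and $|L_{42}|\ll T_1^{15/4+\epsilon}$, while the mean-square bound \eqref{E4} gives $\int_{T_1}^{2T_1}|L_4|^2\,dt\ll T_1^{5/2+\epsilon}$; multiplying these, $\int_{T_1}^{2T_1}|L_2|^8|L_4|^4|L_{42}|\,dt\ll T_1^{11/2}\,T_1^{5/2}\,T_1^{15/4}\,T_1^{5/2+\epsilon}=T_1^{57/4+\epsilon}$. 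Reinstating the weight $t^{-1}$ gives $J_1\ll x^{1/2+\epsilon}T^{53/4+\epsilon}$. For the horizontal segments, \eqref{E28}, Lemma 3.3 and \eqref{E5} give $|L_f(\sigma+iT)|\ll T^{(57/2)(1-\sigma)+\epsilon}$ for $\tfrac12+\epsilon\le\sigma\le b$, with $\tfrac{57}{2}=8\cdot\tfrac{11}{8}+4\cdot\tfrac52+\tfrac{15}{2}$, so that $J_2+J_3\ll T^\epsilon\max_{\frac12+\epsilon\le\sigma\le b}x^\sigma T^{(57/2)(1-\sigma)-1}\ll x^{1/2+\epsilon}T^{53/4+\epsilon}+x^{1+\epsilon}/T$.

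Collecting these, $\sum_{n\le x}\lambda_{f\otimes f\otimes f}(n)^2=xP(\log x)+O(x^{1/2+\epsilon}T^{53/4+\epsilon})+O(x^{1+\epsilon}/T)$, and the two error terms balance when $x^{1/2}T^{53/4}=x/T$, that is, when $T=x^{2/57}$, which produces the stated bound $O(x^{55/57+\epsilon})$. Given Theorem 2.1, the argument is routine; the one place requiring care is the mean-value bookkeeping on the line $\Re(s)=\tfrac12$, where the eightfold, fourfold and single factors have to be apportioned between pointwise estimates and the single available mean-square bound so that the exponents of $T_1$ sum to exactly $\tfrac{57}{4}$, and I expect this apportioning --- rather than anything conceptual --- to be the main point to check. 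It should be stressed that \eqref{E28} is used only to absorb $\zeta(s)^5$: the subconvexity bound of Lemma 3.3 for $L(\mathrm{sym}^2 f,s)$ remains indispensable.
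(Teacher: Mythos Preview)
Your argument is correct and reaches the same exponent $55/57$ as the paper, but via a slightly different contour: you push the vertical line to $\Re(s)=\tfrac12+\epsilon$, whereas the paper keeps the line at $\Re(s)=\tfrac57$ inherited from Theorem~2.1 and re-estimates $J_1$ there with Lindel{\"o}f replacing the $\zeta$-input. On the mean-value side the paper uses Cauchy--Schwarz and two mean squares (for $L(\mathrm{sym}^4 f)^4$ and $L(\mathrm{sym}^4 f\otimes\mathrm{sym}^2 f)$), while you keep only a single $|L_4|^2$ in the integral and bound the rest pointwise. Both bookkeeping schemes give exactly the same $T$-exponent because the total pointwise weight $8\cdot\tfrac{11}{8}+4\cdot\tfrac{5}{2}+\tfrac{15}{2}=\tfrac{57}{2}$ governs the answer, and in either scheme the mean-square saving precisely cancels the length of the dyadic block; hence both balance at $T=x^{2/57}$. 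Your remark that Lemma~3.5 is no longer needed under \eqref{E28} is a genuine simplification over the paper's setup.
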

\begin{theorem}
Assuming Lindel{\"o}f Hypothesis for $\zeta(s)$. For any $\epsilon>0$ and $f\in H_k^*$, we have 
\begin{equation*} 
    \sum_{n\leq x} \lambda_{{\rm{sym}}^2f\otimes f}(n)^2=xQ(\log x)+O_{f,\ \epsilon}\big(x^{(125/133)+\epsilon}\big)
\end{equation*}
where $Q(t)$ is a polynomial of degree $1$.\label{T3.2}
\end{theorem}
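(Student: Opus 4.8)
The plan is to run the proof of Theorem 2.2 essentially verbatim, the only change being that the unconditional convexity-type bound of Lemma 3.4 (used there with $\kappa=13/84$) is replaced throughout by the Lindel\"of Hypothesis \eqref{E28}; in effect this amounts to putting $\kappa=0$ in every estimate of that proof. Concretely, I would begin from the factorization \eqref{E16}, namely $D_f(s)=\zeta(s)^2L(\mathrm{sym}^2f,\ s)^3L(\mathrm{sym}^4f,\ s)^2L(\mathrm{sym}^4f\otimes\mathrm{sym}^2f,\ s)V(s)$ with $V(s)$ absolutely convergent (hence bounded) on $\Re(s)=\frac12+\epsilon$. Apply Perron's formula at $b=1+\epsilon$ with a parameter $1\le T\le x$, choose $T=T^{**}$ as furnished by Lemma 3.5 so that \eqref{E10} holds on the relevant horizontal lines, and shift the contour to $\Re(s)=\frac12+\epsilon$. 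The pole of $D_f(s)x^s/s$ at $s=1$ is of order $2$ and produces the main term $xQ(\log x)$ with $\deg Q=1$; what remains is the vertical integral $J_1$ on $\Re(s)=\frac12+\epsilon$ together with the two horizontal integrals $J_2,J_3$, exactly as in \eqref{E22}.

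For $J_1$ I would estimate $\zeta(\frac12+\epsilon+it)^2\ll t^{\epsilon}$ by \eqref{E28}, bound $L(\mathrm{sym}^2f,\frac12+\epsilon+it)^3\ll t^{33/16+\epsilon}$ by Lemma 3.3, and split the remaining factor $L(\mathrm{sym}^4f,\cdot)^2\,L(\mathrm{sym}^4f\otimes\mathrm{sym}^2f,\cdot)$ via the Cauchy--Schwarz inequality, controlling the two resulting mean squares by Lemma 3.1 applied with $m=10$ and $m=15$ respectively (which on the line $\Re(s)=\frac12$ contribute factors $T_1^{5/2+\epsilon}$ and $T_1^{15/4+\epsilon}$ after taking square roots). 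Dividing a dyadic block by $T_1$ and summing over dyadic ranges then gives $J_1\ll x^{1/2+\epsilon}T^{117/16+\epsilon}$, that is, \eqref{E24} with $\kappa=0$. For the horizontal pieces, \eqref{E5}, \eqref{E7} and \eqref{E10} yield, precisely as in \eqref{E25}, the bound $J_2+J_3\ll x^{1/2+\epsilon}T^{117/16+\epsilon}+x^{1+\epsilon}/T$; note that the horizontal estimate never invoked Lemma 3.4 and so is unchanged.

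Combining the three estimates, $\sum_{n\le x}\lambda_{\mathrm{sym}^2f\otimes f}(n)^2=xQ(\log x)+O(x^{1/2+\epsilon}T^{117/16+\epsilon})+O(x^{1+\epsilon}/T)$, which is \eqref{E26}--\eqref{E27} specialised to $\kappa=0$. Balancing $x^{1/2}T^{117/16}$ against $x/T$ forces $T^{133/16}=x^{1/2}$, i.e.\ $T=x^{8/133}$, and then both error terms are $O(x^{125/133+\epsilon})$, which is the claimed estimate.

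I do not anticipate a genuine obstacle, since the argument is a direct specialisation of the proof of Theorem 2.2. The one point worth stressing is that the Lindel\"of Hypothesis for $\zeta(s)$ does nothing to improve the mean-value inputs from Lemma 3.1 for the higher-degree factors $L(\mathrm{sym}^4f,s)$ and $L(\mathrm{sym}^4f\otimes\mathrm{sym}^2f,s)$; it is precisely these fourth- and second-moment bounds on the critical line that pin down the exponent $117/16$, hence the final $125/133$. Improving further would require subconvexity or Lindel\"of-type information for those symmetric-power and Rankin--Selberg $L$-functions as well.
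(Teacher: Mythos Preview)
Your proposal is correct and is exactly the paper's approach: the paper's proof of Theorem~4.2 simply invokes the asymptotic formula \eqref{E27} established in the proof of Theorem~2.2 and sets $\kappa=\epsilon$ (i.e.\ replaces Lemma~3.4 by the Lindel\"of bound \eqref{E28}), which yields the exponent $(125+32\kappa)/(133+32\kappa)\to 125/133$. Your write-up spells out the same computation in more detail, including the unchanged horizontal estimate \eqref{E25} and the balancing $T=x^{8/133}$, but there is no substantive difference.
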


\begin{proof}[{\bf Proof of Theorem 4.1}]
From the proof of Theorem {\bf 2.1}, recall that 
\begin{align}
    \sum_{n\leq x} \lambda_{f\otimes f\otimes f}(n)^2 =J_1+J_2+J_3+xP(\log x)+O\bigg(\frac{x^{1+\epsilon}}{T}\bigg)\label{E29}
\end{align}
Where $P(t)$ is a polynomial of degree $4$.

For $J_1$ by \eqref{E7}, \eqref{E28} and Cauchy-Schwarz inequality, we have
\begin{align*}
    J_1&\ll x^{\frac{5}{7}+\epsilon}+ x^{\frac{5}{7}+\epsilon}\int_1^T\big|\zeta(\frac{5}{7}+\epsilon+it)^5L(\text{{\rm{sym}}}^2f,\ \frac{5}{7}+\epsilon+it)^8\\
    &\qquad L(\text{{\rm{sym}}}^4f,\ \frac{5}{7}+\epsilon+it)^4 L(\text{{\rm{sym}}}^4f\otimes \text{{\rm{sym}}}^2 f,\ \frac{5}{7}+\epsilon+it)\big|t^{-1}  \ dt\\ &\ll x^{\frac{5}{7}+\epsilon}+x^{\frac{5}{7}+\epsilon}\\&\qquad\sup_{1\leq T_1\leq T}\bigg\{\bigg\{\max_{T_1\leq t\leq 2T_1}\bigg|\frac{\zeta(\frac{5}{7}+\epsilon+it)^5L(\text{{\rm{sym}}}^2f,\ \frac{5}{7}+\epsilon+it)^8}{t}\bigg|\bigg\}\\
&\qquad\bigg\{\int_{T_1}^{2T_1}|L(\text{{\rm{sym}}}^4f,\ \frac{5}{7}+\epsilon+it)^4L(\text{{\rm{sym}}}^4f\otimes \text{{\rm{sym}}}^2 f,\ \frac{5}{7}+\epsilon+it)|   \ dt\bigg\}\bigg\}\\
&\ll x^{\frac{5}{7}+\epsilon}+x^{\frac{5}{7}+\epsilon}\sup_{1\leq T_1\leq T}T_1^{5\epsilon+\frac{22}{7}-1}\bigg(\int_{T_1}^{2T_1}\big|L({\rm{sym}}^4f,\ \frac{5}{7}+\epsilon+it)^4\big|^2  \ dt \bigg)^\frac{1}{2}\\
&\qquad \bigg(\int_{T_1}^{2T_1}\big|L({\rm{sym}}^4f\otimes{\rm{sym}}^2f,\ \frac{5}{7}+\epsilon+it)\big|^2  \ dt \bigg)^\frac{1}{2}.
\end{align*}
By \eqref{E4}, we have
\begin{align*}
    J_1&\ll x^{\frac{5}{7}+\epsilon}+x^{\frac{5}{7}+\epsilon}\sup_{1\leq T_1\leq T}T_1^{5\epsilon+\frac{22}{7}+\frac{20}{2}\times\frac{2}{7}+\frac{15}{2}\times\frac{2}{7}-1}\\
    &\ll x^{(5/7)+\epsilon}T^{(50/7)+10\epsilon}.\
\end{align*}
For the integrals over horizontal segments, by \eqref{E20}, we have 
\begin{align*}
    J_2+J_3\ll x^{(5/7)+\epsilon}T^{(50/7)+10\epsilon}+\frac{x^{1+15\epsilon}}{T}.
\end{align*}
Hence by \eqref{E29}, we have
\begin{equation}
    \sum_{n\leq x}\lambda_{f\otimes f\otimes f}(n)^2=xP(\log x)+O\bigg(\frac{x^{1+15\epsilon}}{T}\bigg)+O\bigg(x^{(5/7)+\epsilon}T^{(50/7)+\epsilon}\bigg).\label{E30}
\end{equation}
By taking $T=x^\frac{2}{57}$ in \eqref{E30}, we have 
\begin{equation*}
    \sum_{n\leq x} \lambda_{f\otimes f\otimes f}(n)^2=xP(\log x)+O\big(x^{(55/57)+\epsilon}\big).
\end{equation*}
\end{proof}

\begin{proof}[{\bf Proof of Theorem 4.2}]
From the asymptotic formula in \eqref{E27}, by assuming Lindel{\"o}f Hypothesis for $\zeta(s)$, we have  $\kappa=\epsilon$, where $\epsilon$ is any positive constant and we obtain
\begin{equation*}
\sum_{n\leq x} \lambda_{{\rm{sym}}^2f\otimes f}(n)^2=xQ(\log x)+O\big(x^{(125/133)+\epsilon}\big).
\end{equation*}
\end{proof}
\begin{remark}
It is easy to check that $\frac{55}{57}<\frac{695}{719}<\frac{175}{181}$ and $\frac{125}{133}<\frac{2729}{2897}<\frac{17}{18}$ (pertain into Theorem \textbf{A} and Theorem \textbf{B}).
\end{remark}

\begin{concluding remark} If one has the Lindel{\"o}f Hypothesis bound for the $L-$ function $L_f(s)$, namely 
\begin{equation*}
    L_f(\sigma+it)\ll (|t|+10)^\epsilon 
\end{equation*}
holds for all $\epsilon>0$ uniformly for $\frac{1}{2}\leq\sigma\leq 2$ and $|t|\geq 10$, then it is not difficult to see that the asymptotic formula
\begin{equation*}
    \sum_{n\leq x}\lambda_{f\otimes f\otimes f}(n)^2=xP(\log x)+O(x^{\frac{1}{2}+\epsilon})
\end{equation*} holds, where $P(t)$ is a polynomial of degree $4
$. Of course such an expected improvement is far away.
\end{concluding remark}

\subsection*{Acknowledgements}
The first author wishes to express his thankfulness to the Funding Agency "Ministry of Human Resource Development (MHRD), Govt. of India" for the fellowship PMRF, Appl. No.PMRF-2122-3190   for it's financial support.

\end{document}